\begin{document}

\title{Quasipolar Subrings of $3\times 3$ Matrix Rings}

\author{Orhan Gurgun}
\address{Orhan Gurgun, Department of Mathematics, Ankara University,  Turkey}
\email{orhangurgun@gmail.com}

\author{Sait Halicioglu}
\address{Sait Hal\i c\i oglu,  Department of Mathematics, Ankara University, Turkey}
\email{halici@ankara.edu.tr}

\author{Abdullah Harmanci}
\address{Abdullah Harmanci, Department of Maths, Hacettepe University,  Turkey}
\email{harmanci@hacettepe.edu.tr}

\date{\empty}
\date{}
\newtheorem{thm}{Theorem}[section]
\newtheorem{lem}[thm]{Lemma}
\newtheorem{prop}[thm]{Proposition}
\newtheorem{cor}[thm]{Corollary}
\newtheorem{exs}[thm]{Examples}
\newtheorem{defn}[thm]{Definition}
\newtheorem{nota}{Notation}
\newtheorem{rem}[thm]{Remark}
\newtheorem{ex}[thm]{Example}

\maketitle

\begin{abstract} An element $a$ of a ring $R$ is called \emph{quasipolar} provided that there
exists an idempotent $p\in R$ such that $p\in comm^2(a)$, $a+p\in
U(R)$ and $ap\in R^{qnil}$. A ring $R$ is \emph{quasipolar} in
case every element in $R$ is quasipolar. In this paper, we
determine conditions under which subrings of $3\times 3$ matrix
rings over local rings are quasipolar. Namely, if $R$ is a
bleached local ring, then we prove that $\mathcal{T}_3(R)$ is
quasipolar if and only if $R$ is uniquely bleached. Furthermore,
it is shown that $T_n(R)$ is quasipolar if and only if
$T_n\big(R[[x]]\big)$ is quasipolar for any positive integer $n$.
 \\[+2mm]
{\bf Keywords:} Quasipolar ring, local ring, $3\times 3$ matrix
ring.
\thanks{ \\{\bf 2010 Mathematics Subject Classification:} 16S50,
16S70, 16U99}
\end{abstract}

\section{Introduction}
Throughout this paper all rings are associative with identity
unless otherwise stated. Following Koliha and Patricio \cite{KP},
the \emph{commutant} and \emph{double commutant} of an element
$a\in R$ are defined by $comm(a)=\{x\in R~|~xa=ax\}$,
$comm^2(a)=\{x\in R~|~xy=yx~\mbox{for all}~y\in comm(a)\}$,
respectively. If $R^{qnil}=\{a\in R~|~1+ax\in U(R)~\mbox{for
every}~x\in comm(a)\}$ and $a\in R^{qnil}$,  then $a$ is said to
be \emph{quasinilpotent} \cite{H}. An element $a\in R$ is called
\emph{quasipolar} provided that there exists an idempotent $p\in
R$ such that $p\in comm^2(a)$, $a+p\in U(R)$ and $ap\in R^{qnil}$.
A ring $R$ is \emph{quasipolar} in case every element in R is
quasipolar. Properties of quasipolar rings were studied in \cite{CC2, CC1, YC}.
For a ring $R$, let $\mathcal{T}_3(R)=\left\{\left[%
\begin{array}{ccc}
  a_{11} & 0 & 0 \\
  a_{21} & a_{22} & a_{23} \\
  0 & 0 & a_{33} \\
\end{array}%
\right]~|~ a_{11},a_{21},a_{22},a_{23},a_{33}\in R\right\}$. Then
$\mathcal{T}_3(R)$ is a ring under the usual addition and
multiplication, and so $\mathcal{T}_3(R)$ is a subring of
$M_3(R)$. Motivated by results in \cite{C1} and \cite{CC3}, we
study quasipolar subrings of $3\times 3$ matrix
rings over local rings. We prove that $\displaystyle{\left[%
\begin{array}{ccc}
  \Bbb Z_{(2)} & 0 & 0\\
  \Bbb Z_{(2)} & \Bbb Z_{(2)} & \Bbb Z_{(2)} \\
  0 & 0 & \Bbb Z_{(2)} \\
\end{array}%
\right]}$ is quasipolar but the full matrix ring $M_3(\Bbb
Z_{(2)})$ is not quasipolar.

In this paper, $M_n(R)$ and $T_n(R)$ denote the ring of all
$n\times n$ matrices and  the ring of all $n\times n$ upper
triangular matrices over $R$, respectively.  We write $R[[x]]$,
$U(R)$ and $J(R)$ for the power series ring over a ring $R$, the
set of all invertible elements and the Jacobson radical of $R$,
respectively. For $A\in M_n(R)$, $\chi (A)$ stands for the
characteristic polynomial $det( tI_n-A)$.

\section{Quasipolar Elements}

In \cite{N}, Nicholson gives several equivalent characterizations
of strongly clean rings through the endomorphism ring of a module.
Analogously, we present similar results for quasipolar rings. For
convenience, we use left modules and write endomorphisms on the
right. For a module $_RM$, we write $E={End}_R(M)$ for the ring of
endomorphisms of $_RM$.

\begin{lem}\label{lem1}\rm{\cite[Lemma 2]{N} Let $\beta, \pi^2=\pi\in {End}_R(M)$. Then
both $M\pi$ and $M(1-\pi)$ are $\beta$-invariant if and only if
$\pi\beta=\beta\pi$.}
\end{lem}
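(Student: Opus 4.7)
The plan is to use the fact that $\pi^2=\pi$ gives a direct sum decomposition $M=M\pi\oplus M(1-\pi)$, and that on each summand $\pi$ acts either as the identity or as zero. Once this is set up, both implications become short calculations.

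For the backward direction, I would assume $\pi\beta=\beta\pi$ and verify invariance of $M\pi$ directly: any element of $M\pi$ has the form $m\pi$ for some $m\in M$, and $(m\pi)\beta=m(\pi\beta)=m(\beta\pi)=(m\beta)\pi\in M\pi$. The same argument applied with $1-\pi$ in place of $\pi$, using that $1-\pi$ is also idempotent and that $\pi\beta=\beta\pi$ forces $(1-\pi)\beta=\beta(1-\pi)$, shows that $M(1-\pi)$ is $\beta$-invariant.

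For the forward direction, I would take an arbitrary $m\in M$ and decompose it as $m=m\pi+m(1-\pi)$. Applying $\beta$ and then $\pi$, I would use the two invariance hypotheses in the following form: elements of $M\pi$ are fixed by $\pi$ (since $(m\pi)\pi=m\pi$), while elements of $M(1-\pi)$ are killed by $\pi$ (since $m(1-\pi)\pi=0$). Invariance gives $m\pi\beta\in M\pi$ and $m(1-\pi)\beta\in M(1-\pi)$, so
\[
m\beta\pi=\bigl(m\pi\beta+m(1-\pi)\beta\bigr)\pi=m\pi\beta+0=m\pi\beta.
\]
Since $m$ was arbitrary, $\beta\pi=\pi\beta$.

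I do not anticipate a real obstacle here; the only thing to be careful about is the convention that endomorphisms act on the right, so products like $m\pi\beta$ mean first apply $\pi$ then $\beta$, and the idempotent identities $(m\pi)\pi=m\pi$ and $m(1-\pi)\pi=0$ must be used in the right order. Everything else is bookkeeping on the direct sum decomposition induced by $\pi$.
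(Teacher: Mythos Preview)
Your argument is correct in both directions and is the standard proof of this elementary fact. The paper itself does not supply a proof of this lemma; it is simply quoted from Nicholson \cite[Lemma~2]{N}, whose proof is exactly the direct-sum bookkeeping you wrote out.
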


Similar to \cite[Theorem 2.1]{C3} we have the following results
for quasipolar endomorphisms of a module.

\begin{thm}\label{orhan2}\rm{ Let $\alpha\in E=$ End$_R(M)$. The following are
equivalent.

\begin{enumerate}
    \item $\alpha$ is quasipolar in $E$.
    \item There exists $\pi^2=\pi\in E$ such that $\pi\in
    comm_{E}^2(\alpha)$, $\alpha\pi$ is a unit in $\pi E\pi$ and
    $\alpha(1-\pi)$ is a quasinilpotent in $(1-\pi) E(1-\pi)$.
    \item $M=P\oplus Q$, where $P$ and $Q$ are $\beta$-invariant
    for every $\beta\in comm_E(\alpha)$, $\alpha|_P$ is a unit in
    End$(P)$ and $\alpha|_Q$ is a quasinilpotent in
    End$(Q)$.
    \item $M=P_1\oplus P_2\oplus\cdots\oplus P_n$ for some $n\geq
    1$, where $P_i$ is $\beta$-invariant for every $\beta\in comm_E(\alpha)$, $\alpha|_{P_i}$
    is quasipolar in End$(P_i)$ for each $i$.
\end{enumerate}}
\end{thm}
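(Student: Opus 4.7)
The plan is to prove the chain $(1)\Leftrightarrow(2)\Leftrightarrow(3)$ and then $(3)\Leftrightarrow(4)$, following the strategy of \cite[Theorem 2.1]{C3} but replacing the single commutant condition used there for strongly clean elements with the double commutant condition characteristic of quasipolarity.

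For $(1)\Leftrightarrow(2)$, I would set $\pi=1-p$, where $p$ is the idempotent supplied by the definition of quasipolar. Since $p\in comm^2_E(\alpha)$ forces $p\alpha=\alpha p$, one obtains the Peirce-type splitting $\alpha=\pi\alpha\pi+(1-\pi)\alpha(1-\pi)$. The condition $\alpha p\in E^{qnil}$ rewrites as $\alpha(1-\pi)\in\bigl((1-\pi)E(1-\pi)\bigr)^{qnil}$ once one checks that quasinilpotence transfers between $E$ and the relevant corner. The invertibility $\alpha+p\in U(E)$ then splits block-diagonally into: $\alpha\pi$ is a unit in $\pi E\pi$, and $p+p\alpha p=p(1+\alpha)p$ is a unit in $pEp$, the second being automatic because $1+q$ is a unit for any quasinilpotent $q$. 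Finally, $\pi\in comm^2_E(\alpha)$ is immediate from $p\in comm^2_E(\alpha)$.

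For $(2)\Leftrightarrow(3)$, Lemma~\ref{lem1} is the key tool. Given $\pi$ as in (2), the double commutant condition makes $\pi$ commute with every $\beta\in comm_E(\alpha)$, so Lemma~\ref{lem1} renders $P:=M\pi$ and $Q:=M(1-\pi)$ both $\beta$-invariant. Under the canonical ring isomorphisms $\mathrm{End}(P)\cong\pi E\pi$ and $\mathrm{End}(Q)\cong(1-\pi)E(1-\pi)$, the restrictions $\alpha|_P$ and $\alpha|_Q$ correspond to $\alpha\pi$ and $\alpha(1-\pi)$, so the unit and quasinilpotent conditions carry over. Conversely, a decomposition as in (3) supplies a projection $\pi$ onto $P$, and the invariance of both summands under each $\beta\in comm_E(\alpha)$ forces $\pi\beta=\beta\pi$ via Lemma~\ref{lem1}, placing $\pi$ in $comm^2_E(\alpha)$.

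For $(3)\Leftrightarrow(4)$, the implication $(3)\Rightarrow(4)$ is immediate with $n=2$, since units (take idempotent $0$) and quasinilpotents (take idempotent $1$) are quasipolar. For $(4)\Rightarrow(3)$, I would apply the already established equivalence $(1)\Leftrightarrow(3)$ to each summand $P_i$ to obtain $P_i=P_i'\oplus Q_i'$ with $\alpha|_{P_i'}$ a unit and $\alpha|_{Q_i'}$ quasinilpotent, both invariant under $comm_{\mathrm{End}(P_i)}(\alpha|_{P_i})$. Since any $\beta\in comm_E(\alpha)$ preserves each $P_i$ by (4) and restricts to an element of that local commutant, both $P_i'$ and $Q_i'$ are $\beta$-invariant; setting $P=\bigoplus P_i'$ and $Q=\bigoplus Q_i'$ produces the decomposition required by (3), after noting that a direct sum of units (resp.\ of quasinilpotent endomorphisms) is again a unit (resp.\ quasinilpotent). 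The main obstacle I anticipate is the bookkeeping around the corner ring translations: carefully verifying that quasinilpotence in $E$ matches quasinilpotence in $(1-\pi)E(1-\pi)$ and in $\mathrm{End}(Q)$, and that in the passage $(4)\Rightarrow(3)$ the global double-commutant status of the assembled projection is preserved under direct summation of the local ones.
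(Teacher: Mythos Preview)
Your proposal is correct and follows essentially the same route as the paper: the paper proves the cycle $(1)\Rightarrow(2)\Rightarrow(3)\Rightarrow(4)\Rightarrow(1)$ using Lemma~\ref{lem1} and the corner-ring identifications exactly as you describe, the only cosmetic difference being that the paper closes the loop via a direct $(4)\Rightarrow(1)$ (assembling the global idempotent $\pi=\sum_j\overline{\pi_j}$ from the local spectral idempotents) rather than your $(4)\Rightarrow(3)$ (assembling $P=\bigoplus P_i'$ and $Q=\bigoplus Q_i'$), and these two arguments are formally equivalent. The bookkeeping obstacle you flag---that commutants and quasinilpotence must be tracked through the direct-sum and corner-ring passages---is precisely where the paper's effort is concentrated as well.
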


\begin{proof} $(1)\Rightarrow (2)$ Since $\alpha$ is quasipolar in
$E$, there exists an idempotent $\tau\in R$ such that $\tau\in
comm_E^2(\alpha)$, $\alpha+\tau=\eta \in U(E)$ and $\alpha\tau\in
E^{qnil}$. Let $\pi=1-\tau$. Clearly, $\pi^2=\pi\in
comm_E^2(\alpha)$. Note that $\alpha,\pi,\eta$ and $\tau$ all
commute. Now, multiplying $\alpha+\tau=\eta$ by $\pi$ yields
$\alpha\pi=\eta\pi=\pi\eta\in \pi E \pi$. Since $\eta^{-1}\pi\in
\pi E \pi$ this gives
$(\alpha\pi)(\eta^{-1}\pi)=(\pi\eta)(\eta^{-1}\pi)=\pi$.
Similarly, $(\eta^{-1}\pi)(\alpha\pi)=\pi$ so $\alpha\pi$ is a
unit in $\pi E \pi$. Let $(1-\pi)\gamma (1-\pi)\in comm_{(1-\pi) E
(1-\pi)}(\alpha(1-\pi))$. Then $(1-\pi)\gamma (1-\pi)\in
comm_{E}(\alpha(1-\pi))$. The remaining proof is to show that
$(1-\pi)+\alpha(1-\pi)\gamma(1-\pi)$ is a unit in $(1-\pi) E
(1-\pi)$. Since $\alpha(1-\pi)\in E^{qnil}$,
$1+\alpha(1-\pi)\gamma(1-\pi)$ is a unit in  $E$ and so
$(1-\pi)+\alpha(1-\pi)\gamma(1-\pi)$ is a unit $(1-\pi) E
(1-\pi)$.

$(2)\Rightarrow (3)$ Given $\pi$ as in $(2)$, let $P=M\pi$ and
$Q=M(1-\pi)$. Then $M=P\oplus Q$. For any $\beta\in
comm_E(\alpha)$, the hypothesis $\pi\in comm^2_E(\alpha)$ implies
that $\pi\beta=\beta\pi$. By Lemma~\ref{lem1}, both $P$ and $Q$
are $\beta$-invariant. As in the proof \cite[Theorem 3]{N},
$\alpha\pi=\alpha|_P$ is a unit in End$(P)$. Let $\gamma\in
comm_{End(Q)}(\alpha|_Q)$. We show that $1_Q+\alpha|_Q\gamma$ is a
unit in End$(Q)$. Clearly, $\gamma\in comm_{(1-\pi) E
(1-\pi)}(\alpha(1-\pi))$. Since $\alpha(1-\pi)$ is a
quasinilpotent in $(1-\pi) E(1-\pi)$,
$(1-\pi)+\alpha(1-\pi)\gamma$ is a unit in $(1-\pi) E (1-\pi)$.
Let
$[(1-\pi)+\alpha(1-\pi)\gamma]^{-1}=(1-\pi)\tau(1-\pi)=\tau_0\in
$End$(Q)$ and let $q\in Q$. Then
$(q)[1_Q+\alpha|_Q\gamma]\tau_0=(q+q(1-\pi)\alpha\gamma)\tau_0=
(q(1-\pi)+q\alpha(1-\pi)\gamma)\tau_0=q[(1-\pi)+\alpha(1-\pi)\gamma]\tau_0=(q)1_Q$.
Hence $(1_Q+\alpha|_Q\gamma)\tau_0=1_Q$. Similarly,
$\tau_0(1_Q+\alpha|_Q\gamma)=1_Q$. Thus $\alpha|_Q$ is a
quasinilpotent in End$(Q)$.

$(3)\Rightarrow(4)$ Suppose $M = P \oplus Q$ as in $(3)$. Since
$\alpha|_P$ is a unit in End$(P)$, $\alpha|_P$ is a quasipolar in
End$(P)$ by ~\cite[Example 2.1]{CC2}. As $\alpha|_Q$ is a
quasinilpotent in End$(Q)$, $1_Q+\alpha|_Q$ is a unit in End$(Q)$.
Further, $1_Q^2=1_Q$ and $1_Q\in comm_{End(Q)}^2(\alpha|_Q)$ so
$\alpha|_{Q}$ is quasipolar in End$(Q)$.

$(4)\Rightarrow(1)$ Let $\lambda_i\in $End$(P_i)$. Given the
situation in $(4)$, extend maps $\lambda_i$ in End$(P_i)$ to
$\overline{\lambda_i}$ in End$(M)$ by defining
$(\sum\limits_{j=1}^{n}p_j)\overline{\lambda_i}=(p_i)\lambda_i$
for any $p_j\in P_j$. Then $\overline{\lambda_i}~
\overline{\lambda_j}=0$ if $i\neq j$ while
$\overline{\lambda_i}~\overline{\mu_i}=\overline{\lambda_i\mu_i}$
and
$\overline{\lambda_i}+\overline{\mu_i}=\overline{\lambda_i+\mu_i}$
for all $\mu_i\in $End$(P_i)$. By hypothesis, there exists
$\pi_j^2=\pi_j\in comm_{End(P_j)}^2(\alpha|_{P_j})$, $\sigma_j\in
U\big($End$(P_j)\big)$ such that $\alpha|_{P_j}+\pi_j=\sigma_j$
and $\alpha|_{P_j}\pi_j\in $End$(P_j)^{qnil}$. If
$\pi=\sum\limits_{j=1}^{n}\overline{\pi_j}$ and
$\sigma=\sum\limits_{j=1}^{n}\overline{\sigma_j}$ then
$\pi^2=\sum\limits_{j=1}^{n}\overline{\pi_j}^2=\pi\in $End$(M)$
and $\sigma$ is a unit in $E$ because
$\sigma^{-1}=\sum\limits_{j=1}^{n}\overline{\sigma_j}^{-1}$. Since
$\alpha=\sum\limits_{j=1}^{n}\overline{\alpha|_{P_j}}=\sum\limits_{j=1}^{n}(\overline{-\pi_j+\sigma_j})=-\pi+\sigma$,
we show that $\pi\in comm_E^2(\alpha)$ and $\alpha\pi\in
E^{qnil}$. Since for each $\beta\in comm_E(\alpha)$, $P$ and $Q$
are $\beta$-invariant. Hence, $\pi\beta=\beta\pi$ by
Lemma~\ref{lem1} and so $\pi\in comm_{E}^2(\alpha)$. For any
$\beta\in comm_E(\alpha\pi)$, we only need to show that
$1_E+\beta\alpha\pi$ is an isomorphism in $E$. Note that
$\beta|_{P_j}\in comm_{End(P_j)}(\alpha|_{P_j})$ and
$1_{P_j}+\beta|_{P_j}\alpha|_{P_j}=(\pi+\beta\alpha)|_{P_j}$.
Since $\alpha|_{P_j}\pi_j\in $ End$(P_j)^{qnil}$, $1_{P_j}+
\beta|_{P_j}\alpha|_{P_j}\pi_j=(\pi+\beta\pi\alpha)|_{P_j}$ is a
unit in End$(P_j)$. Let $\gamma_j\in $End$(P_j)$ be such that
$(1_{P_j}+
\beta|_{P_j}\alpha|_{P_j}\pi_j)\gamma_j=1_{P_j}=\gamma_j(1_{P_j}+
\beta|_{P_j}\alpha|_{P_j}\pi_j)$ and let
$m=\sum\limits_{j=1}^{n}p_j$ with $p_j\in P_j$. So
$(\sum\limits_{j=1}^{n}p_j)(1_E+\beta\alpha\pi)\gamma$ =
$\big(\sum\limits_{j=1}^{n}p_j+(\sum\limits_{j=1}^{n}p_j)\beta\alpha\pi\big)\gamma$
=
 $\big(\sum\limits_{j=1}^{n}(p_j)1_{P_j}+(\sum\limits_{j=1}^{n}(p_j)[\beta|_{P_j}\alpha|_{P_j}\pi_j]\big)\gamma$
= $
\big(\sum\limits_{j=1}^{n}(p_j)[1_{P_j}+\beta|_{P_j}\alpha|_{P_j}\pi_j]\big)\gamma$
= $
\big(\sum\limits_{j=1}^{n}(p_j)[1_{P_j}+\beta|_{P_j}\alpha|_{P_j}\pi_j]\gamma_j\big)$
= $ \sum\limits_{j=1}^{n}(p_j)[1_{P_j}]$=$m$ where
$\gamma=\sum\limits_{j=1}^{n}\overline{\gamma_j}$. Similarly, we
have
 $(\sum\limits_{j=1}^{n}p_j)\gamma(1_E+\beta\alpha\pi)=\sum\limits_{j=1}^{n}(p_j)[1_{P_j}]=m$.
Therefore $\alpha\pi\in E^{qnil}$, the proof is completed.
\end{proof}

The following result is a direct consequence of Theorem
\ref{orhan2}.

\begin{cor} \label{orhan3}\rm{ Let $R$ be a ring. The following are
equivalent for $a\in R$.
\begin{itemize}
\item[(1)] $a\in R$ is quasipolar.
\item[(2)] There exists $e^2=e\in R$ such that $e\in comm_R^2(a)$,
$ae\in U(eRe)$ and $a(1-e)\in (1-e)R(1-e)^{qnil}$.
\end{itemize}}
\end{cor}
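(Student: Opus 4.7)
The plan is to reduce Corollary \ref{orhan3} directly to Theorem \ref{orhan2} by specializing the module $M$ to the regular left module $_RR$. Under the canonical identification $E = \mathrm{End}_R({_RR}) \cong R$ given by sending $r \in R$ to the right-multiplication map $\rho_r : x \mapsto xr$, endomorphism composition corresponds to multiplication in $R$ (in the natural order, since we write endomorphisms on the right), so the ring structure is preserved. Under this identification, for any $a \in R$ the element $\alpha = \rho_a$ satisfies $\mathrm{comm}_E(\alpha) \cong \mathrm{comm}_R(a)$ and consequently $\mathrm{comm}_E^2(\alpha) \cong \mathrm{comm}_R^2(a)$.

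Next I would observe that for any idempotent $e \in R$ corresponding to $\pi = \rho_e \in E$, the corner ring $\pi E \pi$ is isomorphic to $eRe$ (both as sets and as rings), and the correspondence sends $\alpha\pi$ to $ae$. The same applies with $e$ replaced by $1-e$, so $(1-\pi)E(1-\pi) \cong (1-e)R(1-e)$ and $\alpha(1-\pi)$ corresponds to $a(1-e)$. In particular, $\alpha\pi$ is a unit of $\pi E \pi$ if and only if $ae$ is a unit of $eRe$. Moreover, the quasinilpotent condition is intrinsic to the subring: $\alpha(1-\pi)$ is quasinilpotent in $(1-\pi)E(1-\pi)$ precisely when, for every $y \in (1-e)R(1-e)$ commuting with $a(1-e)$, the element $(1-e) + a(1-e)y$ is a unit of $(1-e)R(1-e)$, which is exactly the statement $a(1-e) \in \bigl((1-e)R(1-e)\bigr)^{qnil}$.

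With these translations in place, the equivalence (1)$\Leftrightarrow$(2) of Theorem \ref{orhan2} applied to $\alpha = \rho_a \in E$ reads verbatim as the equivalence (1)$\Leftrightarrow$(2) of Corollary \ref{orhan3}. The only step requiring care is bookkeeping the order of composition so that the ring isomorphism $E \cong R$ is covariant rather than the opposite; since the paper fixes the convention of endomorphisms acting on the right, right multiplication gives a genuine ring isomorphism and no anti-isomorphism artifacts arise. I expect no serious obstacle beyond writing out this dictionary cleanly; everything else is automatic from Theorem \ref{orhan2}.
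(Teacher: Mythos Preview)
Your proposal is correct and is exactly the approach the paper intends: the paper simply states that Corollary~\ref{orhan3} is a direct consequence of Theorem~\ref{orhan2}, and specializing $M={}_RR$ with the identification $E=\mathrm{End}_R({}_RR)\cong R$ via right multiplication (using the right-action convention fixed in the paper) is precisely how that consequence is realized. Your attention to the composition order so that $r\mapsto\rho_r$ is a genuine ring isomorphism rather than an anti-isomorphism is the only subtle point, and you handled it correctly.
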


\section{The Rings $ \mathcal{T}_3(R)$}

For a ring $R$, let $a\in R$, $l_a : R\rightarrow R$ and $r_a :
R\rightarrow R$ denote, respectively, the abelian group
endomorphisms given by $l_a(r) = ar$ and $r_a(r) = ra$ for all
$r\in R$. Thus, for $a$, $b\in R$, $l_a$, $r_b$ is an abelian
group endomorphism such that $(l_a - r_b)(r) = ar - rb$ for any
$r\in R$. A local ring R is called {\it bleached} \cite{C} if, for
any $a\in J(R)$ and any $b\in U(R)$, the abelian group
endomorphisms $l_b - r_a$ and $l_a - r_b$ of $R$ are both
surjective. A local ring R is called {\it uniquely bleached} if,
for any $a\in J(R)$ and any $b\in U(R)$, the abelian group
endomorphisms $l_b - r_a$ and $l_a - r_b$ of $R$ are isomorphic.
According to \cite[Example 2.1.11]{D}, commutative local rings,
division rings, local rings with nil Jacobson radicals, local
rings for which some power of each element of their Jacobson
radicals is central are uniquely bleached. Clearly uniquely
bleached local rings are bleached. But so far it is unknown
whether a bleach local ring is uniquely
bleached. Obviously, $\left[%
\begin{array}{ccc}
  a_{11} & 0 & 0 \\
  a_{21} & a_{22} & a_{23} \\
  0 & 0 & a_{33} \\
\end{array}%
\right]\in U\big(\mathcal{T}_3(R)\big)$ if and only if
$a_{11},a_{22},a_{33}\in U(R)$. Further,
$J\big(\mathcal{T}_3(R)\big)=\left\{\left[%
\begin{array}{ccc}
  a_{11} & 0 & 0 \\
  a_{21} & a_{22} & a_{23} \\
  0 & 0 & a_{33} \\
\end{array}%
\right]~|~ a_{11},a_{22},a_{33}\in J(R), a_{21},a_{23}\in R
\right\}$.  Note that if, for every $A\in \mathcal{T}_3(R)$, there
exists $E^2=E\in comm^2(A)$ such that $A-E\in
U\big(\mathcal{T}_3(R)\big)$ and $EA\in
J\big(\mathcal{T}_3(R)\big)\subseteq \mathcal{T}_3(R)^{qnil}$,
then $-A$ is quasipolar and so $\mathcal{T}_3(R)$ is quasipolar.
 We use this fact in the proof of Theorem \ref{orhan1} without mention.

 By \cite[Example 1]{WC} and \cite[Remark 3.2.11]{Do},
$M_3(R)$ is not quasipolar in general. Our next aim is to
determine to find conditions under which $\mathcal{T}_3(R)$ is
quasipolar. In this direction we can give the following theorem.

\begin{thm}\label{orhan1}\rm{ Let $R$ be a bleached local ring. The following
are equivalent.
\begin{itemize}
\item[(1)] $R$ is uniquely bleached.
\item[(2)] $\mathcal{T}_3(R)$ is quasipolar.
\item[(3)] $T_2(R)$ is quasipolar.
\end{itemize}}
\end{thm}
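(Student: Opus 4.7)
The plan is to prove the cycle $(1)\Rightarrow(2)\Rightarrow(3)\Rightarrow(1)$. The equivalence $(1)\Leftrightarrow(3)$ is the two-dimensional analogue of this theorem and should follow from \cite{CC3}, so the genuinely new content is the implication $(1)\Rightarrow(2)$. Throughout I would use the reduction recorded just above the theorem: for each $A=[a_{ij}]\in\mathcal{T}_3(R)$ it suffices to construct an idempotent $E\in comm^2(A)$ with $A-E\in U(\mathcal{T}_3(R))$ and $EA\in J(\mathcal{T}_3(R))$.

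For $(1)\Rightarrow(2)$, I would split into cases according to which of $a_{11},a_{22},a_{33}$ lie in $U(R)$ and which in $J(R)$. The two uniform cases are handled by $E=0$ and $E=I$. In each of the six mixed cases I would take the diagonal $(p,r,t)\in\{0,1\}^3$ of $E$ to be $1$ exactly at the indices $i$ where $a_{ii}\in J(R)$; this automatically forces $A-E\in U(\mathcal{T}_3(R))$ and $EA\in J(\mathcal{T}_3(R))$. The idempotent condition then leaves the $(2,1)$ and $(2,3)$ entries of $E$ (call them $q$ and $s$) as free parameters, and imposing $AE=EA$ reduces to one or two equations of the form $a_{22}\,x-x\,a_{11}=\pm a_{21}$ or $a_{22}\,x-x\,a_{33}=\pm a_{23}$. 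In each such equation one factor lies in $U(R)$ and the other in $J(R)$, so the bleaching hypothesis produces the required $q,s$.

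The delicate step is verifying $E\in comm^2(A)$. Given $B=[b_{ij}]\in comm(A)$, the diagonal components of $[A,B]=0$ yield $[a_{ii},b_{ii}]=0$ for $i=1,2,3$, while the two off-diagonal components give two bilinear identities involving $b_{21}$ and $b_{23}$. Combining these identities with the equations defining $q$ and $s$, a direct computation should show that the expression measuring the failure of $EB=BE$ is annihilated by the appropriate map $l_{a_{22}}-r_{a_{ii}}$; the injectivity side of unique bleaching then forces $EB=BE$.

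For $(2)\Rightarrow(3)$ I would set $e=E_{22}+E_{33}$ and use the identification $e\mathcal{T}_3(R)e\cong T_2(R)$, combined with the short general fact, derivable from Corollary~\ref{orhan3}, that every corner $fSf$ of a quasipolar ring $S$ is again quasipolar: if $p\in comm_S^2(a)$ witnesses quasipolarity of $a\in fSf$ in $S$, then $p$ commutes with $f$ and $fpf$ is the required witness in $fSf$. Finally, $(3)\Rightarrow(1)$ is supplied by the $T_2$-analogue of this theorem in \cite{CC3}. The principal obstacle is the bookkeeping in $(1)\Rightarrow(2)$: the six mixed cases are essentially parallel but each demands the correct idempotent shape, a correct invocation of surjectivity to build $E$, and a correct invocation of injectivity to place $E$ in $comm^2(A)$.
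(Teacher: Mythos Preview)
Your proposal is correct and follows essentially the same route as the paper: the same eight-case analysis for $(1)\Rightarrow(2)$ using surjectivity of $l_{a_{22}}-r_{a_{ii}}$ to build $E$ and injectivity to place it in $comm^2(A)$, a corner-ring argument for $(2)\Rightarrow(3)$, and an appeal to the $T_2$ result for $(3)\Rightarrow(1)$. Two small discrepancies: the paper uses the corner idempotent $E_{11}+E_{22}$ (your $E_{22}+E_{33}$ works equally well) and cites \cite[Proposition~3.6]{YC} for the corner fact; and the $T_2$ equivalence you need is \cite[Proposition~2.9]{CC1}, not \cite{CC3}, which treats strongly clean rather than quasipolar matrices.
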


\begin{proof} $(1)\Rightarrow (2)$ Let $A=\left[%
\begin{array}{ccc}
  a_{11} & 0 & 0 \\
  a_{21} & a_{22} & a_{23} \\
  0 & 0 & a_{33} \\
\end{array}%
\right]\in \mathcal{T}_3(R)$. Consider the following cases.

\textbf{Case 1.} $a_{11},a_{22},a_{33}\in J(R)$. Then $A+I_3\in
U(\mathcal{T}_3(R))$ and $AI_3=A\in
J\big(\mathcal{T}_3(R)\big)\subseteq \mathcal{T}_3(R)^{qnil}$. So
$A$ is quasipolar.

\textbf{Case 2.} $a_{11},a_{22},a_{33}\in U(R)$. Then $A+0\in
U(\mathcal{T}_3(R))$ and $A0=0\in \mathcal{T}_3(R)^{qnil}$. So $A$
is quasipolar.

\textbf{Case 3.} $a_{11}\in U(R),a_{22},a_{33}\in J(R)$. There
exists a unique element $e_{21}\in R$ such that
$a_{22}e_{21}-e_{21}a_{11}=a_{21}$. Let $E=\left[%
\begin{array}{ccc}
  0 & 0 & 0 \\
  e_{21} & 1 & 0 \\
  0 & 0 & 1 \\
\end{array}%
\right]$. Then $E^2=E$, $A-E\in U\big(\mathcal{T}_3(R)\big)$ and
$AE\in J\big(\mathcal{T}_3(R)\big)\subseteq
\mathcal{T}_3(R)^{qnil}$. We show that
$E\in comm^2(A)$. Let $X=\left[%
\begin{array}{ccc}
  x_{11} & 0 & 0 \\
  x_{21} & x_{22} & x_{23} \\
  0 & 0 & x_{33} \\
\end{array}%
\right]\in comm(A)$. Then $XA=AX$ and so

\[
\begin{aligned}
  a_{11}x_{11}=x_{11}a_{11},~
    a_{22}x_{22}=x_{22}a_{22},~a_{33}x_{33}=x_{33}a_{33}\\
x_{21}a_{11}+x_{22}a_{21}=a_{21}x_{11}+a_{22}x_{21}\\
 x_{22}a_{23}+x_{23}a_{33}=a_{22}x_{23}+a_{23}x_{33}
 \end{aligned}
\text{\quad ~~~~~~~~~~\quad}
\begin{gathered}
$(i)$\\
$(ii)$\\
$(iii)$
\end{gathered}\]

\noindent Since $a_{22}e_{21}-e_{21}a_{11}=a_{21}$,
$a_{22}[x_{22}e_{21}-e_{21}x_{11}-x_{21}]-[x_{22}e_{21}-e_{21}x_{11}-x_{21}]a_{11}=0$
by (i) and (ii). By $(1)$, $l_{a_{22}}-r_{a_{11}}$ is injective
and so $x_{22}e_{21}-e_{21}x_{11}=x_{21}$. That is, $XE=EX$. Hence
$E\in comm^2(A)$.

\textbf{Case 4.} $a_{11}\in J(R),a_{22}\in U(R),a_{33}\in J(R)$.
There exist unique elements $e_{21},e_{23}\in R$ such that
$a_{22}e_{21}-e_{21}a_{11}=-a_{21}$ and
$a_{22}e_{23}-e_{23}a_{11}=-a_{23}$. Let $E=\left[%
\begin{array}{ccc}
  1 & 0 & 0 \\
  e_{21} & 0 & e_{23} \\
  0 & 0 & 1 \\
\end{array}%
\right]$. Then $E^2=E$, $A-E\in U\big(\mathcal{T}_3(R)\big)$ and
$AE\in J\big(\mathcal{T}_3(R)\big)\subseteq
\mathcal{T}_3(R)^{qnil}$. We prove $E\in comm^2(A)$. Let $X=\left[%
\begin{array}{ccc}
  x_{11} & 0 & 0 \\
  x_{21} & x_{22} & x_{23} \\
  0 & 0 & x_{33} \\
\end{array}%
\right]\in comm(A)$. Then $XA=AX$. Since
$a_{22}e_{21}-e_{21}a_{11}=-a_{21}$,
$a_{22}[-x_{22}e_{21}+e_{21}x_{11}-x_{21}]-[-x_{22}e_{21}+e_{21}x_{11}-x_{21}]a_{11}=0$
by (i) and (ii). By $(1)$, $l_{a_{22}}-r_{a_{11}}$ is injective
and so $x_{22}e_{21}+x_{21}=e_{21}x_{11}$. Since
$a_{22}e_{23}-e_{23}a_{11}=-a_{23}$,
$a_{22}[-x_{22}e_{23}+e_{23}x_{33}-x_{23}]-[-x_{22}e_{23}+e_{23}x_{33}-x_{23}]a_{11}=0$
by (i) and (iii). By $(1)$, $l_{a_{22}}-r_{a_{11}}$ is injective
and so $x_{22}e_{23}+x_{23}=e_{23}x_{33}$. That is, $XE=EX$. Hence
$E\in comm^2(A)$.

\textbf{Case 5.} $a_{11},a_{22}\in J(R),a_{33}\in U(R)$. There
exists a unique element $e_{23}\in R$ such that
$a_{22}e_{23}-e_{23}a_{33}=a_{23}$. Let $E=\left[%
\begin{array}{ccc}
  1 & 0 & 0 \\
  0 & 1 & e_{23} \\
  0 & 0 & 0 \\
\end{array}%
\right]$. Then $E^2=E$, $A-E\in U\big(\mathcal{T}_3(R)\big)$ and
$AE\in J\big(\mathcal{T}_3(R)\big)\subseteq
\mathcal{T}_3(R)^{qnil}$. We show that
$E\in comm^2(A)$. Let $X=\left[%
\begin{array}{ccc}
  x_{11} & 0 & 0 \\
  x_{21} & x_{22} & x_{23} \\
  0 & 0 & x_{33} \\
\end{array}%
\right]\in comm(A)$. Then $XA=AX$. Since
$a_{22}e_{23}-e_{23}a_{33}=a_{23}$,
$a_{22}[x_{22}e_{23}-e_{23}x_{33}-x_{23}]-[x_{22}e_{23}-e_{23}x_{33}-x_{23}]a_{33}=0$
by (i) and (iii). By $(1)$, $l_{a_{22}}-r_{a_{33}}$ is injective
and so $x_{22}e_{23}-e_{23}x_{33}=x_{23}$. That is, $XE=EX$. Hence
$E\in comm^2(A)$.

\textbf{Case 6.} $a_{11}\in J(R),a_{22},a_{33}\in U(R)$. There
exists a unique element $e_{21}\in R$ such that
$a_{22}e_{21}-e_{21}a_{11}=-a_{21}$. Let $E=\left[%
\begin{array}{ccc}
  1 & 0 & 0 \\
  e_{21} & 0 & 0 \\
  0 & 0 & 0 \\
\end{array}%
\right]$. Then $E^2=E$, $A-E\in U\big(\mathcal{T}_3(R)\big)$ and
$AE\in J\big(\mathcal{T}_3(R)\big)\subseteq
\mathcal{T}_3(R)^{qnil}$. We prove that $E\in comm^2(A)$. Let $X=\left[%
\begin{array}{ccc}
  x_{11} & 0 & 0 \\
  x_{21} & x_{22} & x_{23} \\
  0 & 0 & x_{33} \\
\end{array}%
\right]\in comm(A)$. Then $XA=AX$. Since
$a_{22}e_{21}-e_{21}a_{11}=-a_{21}$,
$a_{22}[-x_{22}e_{21}+e_{21}x_{11}-x_{21}]-[-x_{22}e_{21}+e_{21}x_{11}-x_{21}]a_{11}=0$
by (i) and (ii). By $(1)$, $l_{a_{22}}-r_{a_{11}}$ is injective
and so $x_{22}e_{21}+x_{21}=e_{21}x_{11}$. That is, $XE=EX$. Hence
$E\in comm^2(A)$.

\textbf{Case 7.} $a_{11}\in U(R),a_{22}\in J(R),a_{33}\in U(R)$.
There exist unique elements $e_{21},e_{23}\in R$ such that
$a_{22}e_{21}-e_{21}a_{11}=a_{21}$ and
$a_{22}e_{23}-e_{23}a_{33}=a_{23}$. Let $E=\left[%
\begin{array}{ccc}
  0 & 0 & 0 \\
  e_{21} & 1 & e_{23} \\
  0 & 0 & 0 \\
\end{array}%
\right]$. Then $E^2=E$, $A-E\in U\big(\mathcal{T}_3(R)\big)$ and
$AE\in J\big(\mathcal{T}_3(R)\big)\subseteq
\mathcal{T}_3(R)^{qnil}$. To show
$E\in comm^2(A)$ let $X=\left[%
\begin{array}{ccc}
  x_{11} & 0 & 0 \\
  x_{21} & x_{22} & x_{23} \\
  0 & 0 & x_{33} \\
\end{array}%
\right]\in comm(A)$. Then $XA=AX$. Since
$a_{22}e_{21}-e_{21}a_{11}=a_{21}$,
$a_{22}[x_{22}e_{21}-e_{21}x_{11}-x_{21}]-[x_{22}e_{21}-e_{21}x_{11}-x_{21}]a_{11}=0$
by (i) and (ii). By $(1)$, $l_{a_{22}}-r_{a_{11}}$ is injective
and so $x_{22}e_{21}-e_{21}x_{11}=x_{21}$. Since
$a_{22}e_{23}-e_{23}a_{33}=a_{23}$,
$a_{22}[x_{22}e_{23}-e_{23}x_{33}-x_{23}]-[x_{22}e_{23}-e_{23}x_{33}-x_{23}]a_{33}=0$
by (i) and (iii). By $(1)$, $l_{a_{22}}-r_{a_{33}}$ is injective
and so $x_{22}e_{23}-e_{23}x_{33}=x_{23}$. That is, $XE=EX$. Hence
$E\in comm^2(A)$.

\textbf{Case 8.} $a_{11},a_{22}\in U(R),a_{33}\in J(R)$. There
exists a unique element $e_{23}\in R$ such that
$a_{22}e_{23}-e_{23}a_{33}=-a_{23}$. Let $E=\left[%
\begin{array}{ccc}
  0 & 0 & 0 \\
  0 & 0 & e_{23} \\
  0 & 0 & 1 \\
\end{array}%
\right]$. Then $E^2=E$, $A-E\in U\big(\mathcal{T}_3(R)\big)$ and
$AE\in J\big(\mathcal{T}_3(R)\big)\subseteq
\mathcal{T}_3(R)^{qnil}$. The remaining proof is to show that
$E\in comm^2(A)$. Let $X=\left[%
\begin{array}{ccc}
  x_{11} & 0 & 0 \\
  x_{21} & x_{22} & x_{23} \\
  0 & 0 & x_{33} \\
\end{array}%
\right]\in comm(A)$. Then $XA=AX$. Since
$a_{22}e_{23}-e_{23}a_{33}=-a_{23}$,
$a_{22}[-x_{22}e_{23}+e_{23}x_{33}-x_{23}]-[-x_{22}e_{23}+e_{23}x_{33}-x_{23}]a_{33}=0$
by (i) and (iii). By $(1)$, $l_{a_{22}}-r_{a_{33}}$ is injective
and so $x_{22}e_{23}+x_{23}=e_{23}x_{33}$. That is, $XE=EX$. Hence
$E\in comm^2(A)$.

$(2)\Rightarrow(3)$ Assume that $\mathcal{T}_3(R)$ is quasipolar.
Let $E=\left[%
\begin{array}{ccc}
  1 & 0 & 0 \\
  0 & 1 & 0 \\
  0 & 0 & 0 \\
\end{array}%
\right]\in \mathcal{T}_3(R)$. Then $T_2(R)\cong
E\mathcal{T}_3(R)E$. Thus $T_2(R)$ is quasipolar by
\cite[Proposition 3.6]{YC}.

$(3)\Rightarrow(1)$ It follows from \cite[Proposition 2.9]{CC1}.
\end{proof}

An element $a\in R$ is {\it strongly rad clean} provided that
there exists an idempotent $e\in R$ such that $ae=ea$ and $a-e\in
U(R)$ and $ea\in J(eRe)$. A ring $R$ is {\it strongly rad clean}
in case every element in $R$ is strongly rad clean (cf. \cite{D}).

Due to the proof of Theorem \ref{orhan1}, we have the following.

\begin{cor} \label{orhan4}\rm{ Let $R$ be a local ring. The following
are equivalent.
\begin{itemize}
\item[(1)] $\mathcal{T}_3(R)$ is strongly rad clean.
\item[(2)] $R$ is bleached.
\end{itemize}}
\end{cor}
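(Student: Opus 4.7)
The plan is to parallel the case analysis of Theorem \ref{orhan1}, exploiting that the definition of strongly rad clean relaxes the quasipolar requirement $p\in comm^2(a)$ to mere commutation $ea=ae$ and replaces $ap\in R^{qnil}$ by $ea\in J(eRe)$. The first change means that the Sylvester equations solved in Theorem \ref{orhan1} only need to be solvable, not uniquely solvable, so the hypothesis weakens from uniquely bleached to bleached; the second change is essentially free, since the constructed $EA$ already lies in $J(\mathcal{T}_3(R))$ in every case there.

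For $(2)\Rightarrow(1)$, I would reuse verbatim the eight cases of Theorem \ref{orhan1} on the diagonal pattern $(a_{11},a_{22},a_{33})\in\{U(R),J(R)\}^{3}$, with the same idempotent $E$ in each case. Since $R$ is bleached, the required elements $e_{21},e_{23}\in R$ are obtained from surjectivity of $l_{a_{22}}-r_{a_{11}}$ and $l_{a_{22}}-r_{a_{33}}$. A direct off-diagonal entry comparison shows each Sylvester equation is precisely the condition forcing $EA=AE$; this is already implicit in Theorem \ref{orhan1}, since $\pi\in comm^2(A)$ forces $\pi A=A\pi$. The conditions $E^2=E$, $A-E\in U(\mathcal{T}_3(R))$ and $EA\in J(\mathcal{T}_3(R))$ are inherited verbatim from that proof. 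Finally, $EA=E(EA)E$ together with the standard inclusion $EJ(S)E\subseteq J(ESE)$, valid for any idempotent $E$ in any ring $S$, upgrades $EA\in J(\mathcal{T}_3(R))$ to $EA\in J(E\mathcal{T}_3(R)E)$, completing the verification.

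For $(1)\Rightarrow(2)$, I would fix $a\in J(R)$, $b\in U(R)$ and $c\in R$, and exhibit $c$ in the image of each of $l_b-r_a$ and $l_a-r_b$. Applying strongly rad clean to
\[
A=\left[
\begin{array}{ccc}
 a & 0 & 0\\
 c & b & 0\\
 0 & 0 & 1
\end{array}\right]\in \mathcal{T}_3(R)
\]
yields an idempotent $E$ commuting with $A$ with $A-E\in U(\mathcal{T}_3(R))$ and $EA\in J(E\mathcal{T}_3(R)E)$. Since $R$ is local, the diagonal of $E$ lies in $\{0,1\}^{3}$, and the three conditions pin it down to $(1,0,0)$ via a short case check against each diagonal entry of $A$. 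Comparing the $(2,1)$-entries of $EA$ and $AE$ then yields $be_{21}-e_{21}a=-c$, so $c$ is in the image of $l_b-r_a$. Repeating the argument with the matrix whose diagonal is $(b,a,1)$, which forces the diagonal of $E$ to be $(0,1,0)$, produces $e_{21}\in R$ with $ae_{21}-e_{21}b=c$, so $c$ is in the image of $l_a-r_b$; hence $R$ is bleached.

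The main anticipated obstacles will be (i) verifying in the converse that the three conditions on $E$ really pin down its diagonal, which relies on locality of $R$ together with a case check on each of the three diagonal entries, and (ii) transferring radical membership between $J(\mathcal{T}_3(R))$ and $J(E\mathcal{T}_3(R)E)$ in both directions, which is handled by the corner inclusion $EJ(S)E\subseteq J(ESE)$.
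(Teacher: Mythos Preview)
Your proposal is correct and matches the paper's approach: the paper's entire proof is the single sentence ``Due to the proof of Theorem~\ref{orhan1}, we have the following,'' so for $(2)\Rightarrow(1)$ you are doing exactly what the authors intend, namely rerunning the eight cases with only surjectivity of $l_{a_{22}}-r_{a_{ii}}$ needed and observing that $EA\in J\bigl(\mathcal{T}_3(R)\bigr)$ already yields $EA\in J\bigl(E\mathcal{T}_3(R)E\bigr)$. For $(1)\Rightarrow(2)$ the paper gives no details at all; your argument via the test matrices with diagonals $(a,b,1)$ and $(b,a,1)$ is the natural converse and is sound, the only point requiring care being the elimination of the ``wrong'' value of $e_{22}$ (respectively $e_{11}$), which follows because the corner ring $E\mathcal{T}_3(R)E$ surjects onto $R$ via the relevant diagonal entry, forcing that entry of $EA$ into $J(R)$.
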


For a ring $R$, let $\mathcal{L}_3(R)=\left\{\left[%
\begin{array}{ccc}
  a_{11} & 0 & 0 \\
  0 & a_{22} & 0 \\
  a_{31} & 0 & a_{33} \\
\end{array}%
\right]~|~ a_{11},a_{31},a_{22},a_{33}\in R\right\}$. Then
$\mathcal{L}_3(R)$ is a ring under the usual addition and
multiplication, and so $\mathcal{L}_3(R)$ is a subring of
$M_3(R)$. Our next endeavor is to find conditions under which
$\mathcal{L}_3(R)$ is quasipolar.

\begin{prop}\label{orhan5}\rm{ Let $R$ be a bleached local
ring. The following are equivalent.
\begin{itemize}
\item[(1)] $R$ is uniquely bleached.
\item[(2)] $\mathcal{L}_3(R)$ is quasipolar.
\end{itemize}}
\end{prop}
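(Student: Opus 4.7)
My plan is to reduce Proposition~\ref{orhan5} to Theorem~\ref{orhan1} by exposing the block structure of $\mathcal{L}_3(R)$. The crucial observation is that, for any $A\in\mathcal{L}_3(R)$, the $(2,2)$-entry is completely decoupled from the other four entries under both addition and multiplication, while those four entries behave as a $2\times 2$ lower triangular matrix sitting in positions $(1,1),(3,1),(3,3)$. This suggests defining
\[
\varphi:\mathcal{L}_3(R)\longrightarrow T_2(R)\times R,\qquad \begin{pmatrix} a_{11}&0&0\\ 0&a_{22}&0\\ a_{31}&0&a_{33}\end{pmatrix}\longmapsto\left(\begin{pmatrix} a_{33}&a_{31}\\ 0&a_{11}\end{pmatrix},\,a_{22}\right);
\]
a routine matrix computation, incorporating the standard flip isomorphism from lower to upper triangular $2\times 2$ matrices, shows that $\varphi$ is a ring isomorphism.

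Next I will invoke the standard fact that a finite direct product $A\times B$ of rings is quasipolar if and only if both $A$ and $B$ are quasipolar. This holds because $U(A\times B)=U(A)\times U(B)$, the commutant and double commutant decompose componentwise, idempotents split as pairs, and $(a,b)\in(A\times B)^{qnil}$ iff $a\in A^{qnil}$ and $b\in B^{qnil}$; hence any witnessing idempotent $p$ for $(a,b)$ decomposes into independent witnesses for $a$ and $b$. Transported across $\varphi$, this yields that $\mathcal{L}_3(R)$ is quasipolar iff both $T_2(R)$ and $R$ are quasipolar.

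To finish, I observe that $R$ is automatically quasipolar: since $R$ is local, any $a\in R$ is either a unit (take $p=0$) or lies in $J(R)$ (take $p=1$), and $p=0,1$ always belong to $comm^2(a)$. Hence $\mathcal{L}_3(R)$ is quasipolar iff $T_2(R)$ is quasipolar, and by Theorem~\ref{orhan1}, under the standing assumption that $R$ is bleached local, the latter is equivalent to $R$ being uniquely bleached. The only step that is not purely mechanical is the direct-product lemma, which the excerpt does not state explicitly; I expect this to be the main obstacle, and would include the one-line componentwise verification sketched above rather than citing it as folklore.
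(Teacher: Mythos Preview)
Your proposal is correct and essentially identical to the paper's own argument: the paper defines the very same isomorphism $\varphi:\mathcal{L}_3(R)\to T_2(R)\oplus R$ (citing \cite[Proposition~2.2]{C1}), observes that $R$ is quasipolar because it is local, and then reduces to Theorem~\ref{orhan1}. The only difference is that you spell out the direct-product lemma explicitly, whereas the paper uses it tacitly.
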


\begin{proof} Let $\varphi: \mathcal{L}_3(R)\rightarrow T_2(R)\oplus
R$ given by $$\left[%
\begin{array}{ccc}
  a_{11} & 0 & 0 \\
  0 & a_{22} & 0 \\
  a_{31} & 0 & a_{33} \\
\end{array}%
\right] \mapsto \bigg(\left[%
\begin{array}{cc}
  a_{33} & a_{31} \\
  0 & a_{11} \\
\end{array}%
\right], a_{22}\bigg).$$ Then $\varphi$ is an isomorphism (see
\cite[Proposition 2.2]{C1}). Since $R$ is local, it is quasipolar.
Hence $\mathcal{L}_3(R)$ is quasipolar if and only if $T_2(R)$ is
quasipolar. Therefore it follows from Theorem~\ref{orhan1}.
\end{proof}

\begin{cor}\label{orhan6} \rm{ Let $R$ be a bleached local
ring. The following are equivalent.
\begin{itemize}
\item[(1)] $R$ is uniquely bleached.
\item[(2)] The ring $\left\{\left[%
\begin{array}{ccc}
  a_{11} & 0 & 0 \\
  0 & a_{22} & 0 \\
  a_{31} & a_{32} & a_{33} \\
\end{array}%
\right]~|~ a_{11},a_{31},a_{32},a_{22},a_{33}\in R\right\}$ is
quasipolar.
\item[(3)] The ring $\left\{\left[%
\begin{array}{ccc}
  a_{11} & 0 & a_{13} \\
  0 & a_{22} & a_{23} \\
  0 & 0 & a_{33} \\
\end{array}%
\right]~|~ a_{11},a_{13},a_{23},a_{22},a_{33}\in R\right\}$ is
quasipolar.
\end{itemize}}
\end{cor}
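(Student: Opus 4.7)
The plan is to recognize each of the rings in (2) and (3) as a variant of $\mathcal{T}_3(R)$ and then invoke Theorem~\ref{orhan1}. Once the two identifications are in hand, the corollary follows by chasing equivalences.

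For the implication (1)$\Leftrightarrow$(2), I would introduce the permutation matrix $P \in M_3(R)$ that interchanges the second and third basis vectors, so that $P^2 = I_3$. A direct computation shows that the inner automorphism $A \mapsto P A P$ of $M_3(R)$ carries $\mathcal{T}_3(R)$ bijectively onto the ring in (2): the $(2,1)$ entry of $\mathcal{T}_3(R)$ moves to position $(3,1)$, the $(2,3)$ entry moves to $(3,2)$, and the diagonal entries $a_{22}$ and $a_{33}$ are swapped. Being a ring isomorphism, it preserves quasipolarity, so the ring in (2) is quasipolar if and only if $\mathcal{T}_3(R)$ is. Theorem~\ref{orhan1} then gives (1)$\Leftrightarrow$(2) at once.

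For (2)$\Leftrightarrow$(3), I would use the transpose map $A \mapsto A^{T}$. It sends the underlying set of matrices in (3) onto the underlying set in (2) (after the obvious relabeling $a_{13}\leftrightarrow a_{31}$, $a_{23}\leftrightarrow a_{32}$), and since transposition reverses multiplication this identifies the ring in (3) with the opposite ring of the ring in (2). The remaining observation is that quasipolarity is invariant under passage to the opposite ring: if $a$ is quasipolar in $S$ with idempotent witness $p$, the inclusion $p\in comm_S^2(a)\subseteq comm_S(a)$ forces $ap=pa$, so the three defining conditions $p\in comm^2(a)$, $a+p\in U$ and $ap\in S^{qnil}$ transfer verbatim to $S^{op}$. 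In particular $S^{qnil}=(S^{op})^{qnil}$ because $1+bx\in U(S)$ if and only if $1+xb\in U(S)$ by Jacobson's classical trick. Combining with (1)$\Leftrightarrow$(2) yields the full chain of equivalences.

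The main obstacle is not a conceptual one but the bookkeeping around the opposite-ring step — specifically the identification $S^{qnil}=(S^{op})^{qnil}$, which has to be unpacked from the definition using the $1+bx$ versus $1+xb$ symmetry. Everything else amounts to routine verification of $3\times 3$ matrix patterns under conjugation by a permutation and under transposition.
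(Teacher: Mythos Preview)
Your argument is correct and, for $(1)\Leftrightarrow(2)$, coincides with the paper's: the explicit isomorphism $\mathcal{T}_3(R)\to\{\text{ring in }(2)\}$ that the paper writes out is exactly your conjugation by the transposition matrix swapping the second and third coordinates. For the remaining equivalence the paper says only ``$(1)\Leftrightarrow(3)$ is symmetric,'' and your transpose/opposite-ring step is a legitimate and more explicit way to make that precise---indeed some such device is needed, since one checks that no permutation conjugation sends the pattern of $\mathcal{T}_3(R)$ onto the pattern in $(3)$. One minor simplification: the Jacobson $1+ab$ versus $1+ba$ trick is not actually required to see $S^{qnil}=(S^{op})^{qnil}$, because the quasinilpotent condition only quantifies over $x\in comm(b)$, for which $bx=xb$ holds tautologically.
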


\begin{proof} $(1)\Leftrightarrow(2)$ Let $$\varphi: \mathcal{T}_3(R)\rightarrow \left\{\left[%
\begin{array}{ccc}
  a_{11} & 0 & 0 \\
  0 & a_{22} & 0 \\
  a_{31} & a_{32} & a_{33} \\
\end{array}%
\right]~|~ a_{11},a_{31},a_{32},a_{22},a_{33}\in R\right\}$$ given
by $A=\left[%
\begin{array}{ccc}
  a_{11} & 0 & 0 \\
  a_{21} & a_{22} & a_{23} \\
  0 & 0 & a_{33} \\
\end{array}%
\right] \mapsto \left[%
\begin{array}{ccc}
  a_{11} & 0 & 0 \\
  0 & a_{33} & 0 \\
  a_{21} & a_{23} & a_{22} \\
\end{array}%
\right]$ for any $A\in \mathcal{T}_3(R)$. Then $\varphi$ is an
isomorphism (see \cite[Corollary 3.4]{C2}). In view of
Theorem~\ref{orhan1}, $\mathcal{T}_3(R)$ is quasipolar if and only
if $\left\{\left[%
\begin{array}{ccc}
  a_{11} & 0 & 0 \\
  0 & a_{22} & 0 \\
  a_{31} & a_{32} & a_{33} \\
\end{array}%
\right]~|~ a_{11},a_{31},a_{32},a_{22},a_{33}\in R\right\}$ is
quasipolar, as asserted.

$(1)\Leftrightarrow(3)$ is symmetric.
\end{proof}

\begin{cor}\label{orhan7} \rm{ Let $R$ be a bleached local
ring. The following are equivalent.
\begin{itemize}
\item[(1)] $R$ is uniquely bleached.
\item[(2)] The ring $S_1=\left\{\left[%
\begin{array}{ccc}
  a_{11} & 0 & a_{13} \\
  0 & a_{22} & 0 \\
  0 & 0 & a_{33} \\
\end{array}%
\right]~|~ a_{11},a_{13},a_{22},a_{33}\in R\right\}$ is
quasipolar.
\item[(3)] The ring $S_2=\left\{\left[%
\begin{array}{ccc}
  a_{11} & 0 & 0 \\
  0 & a_{22} & 0 \\
  0 & a_{32} & a_{33} \\
\end{array}%
\right]~|~ a_{11},a_{31},a_{22},a_{33}\in R\right\}$ is
quasipolar.
\end{itemize}}
\end{cor}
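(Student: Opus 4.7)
The plan is to follow the template of Proposition \ref{orhan5}, by exhibiting explicit ring isomorphisms that split $S_1$ and $S_2$ as a direct product $T_2(R) \oplus R$, so that the result drops out immediately from Theorem \ref{orhan1}. The structural observation is that in $S_1$ the $(2,2)$-entry $a_{22}$ is completely decoupled from the other three entries $a_{11}, a_{13}, a_{33}$, which among themselves behave like an upper triangular $2\times 2$ matrix. Symmetrically, in $S_2$ the $(1,1)$-entry $a_{11}$ is decoupled from the block formed by $a_{22}, a_{32}, a_{33}$.

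For $(1)\Leftrightarrow(2)$, I would define
$$\varphi_1 : S_1 \longrightarrow T_2(R) \oplus R, \qquad \left[\begin{array}{ccc} a_{11} & 0 & a_{13} \\ 0 & a_{22} & 0 \\ 0 & 0 & a_{33} \end{array}\right] \longmapsto \left(\left[\begin{array}{cc} a_{11} & a_{13} \\ 0 & a_{33} \end{array}\right],\, a_{22}\right),$$
and check that it is a ring isomorphism by observing the block partition $\{1,3\}\sqcup\{2\}$ of the indices; additivity is clear and the product in $S_1$ respects this partition because all cross entries vanish. For $(1)\Leftrightarrow(3)$ I would use the analogous map based on the partition $\{1\}\sqcup\{2,3\}$,
$$\varphi_2 : S_2 \longrightarrow R \oplus T_2(R), \qquad \left[\begin{array}{ccc} a_{11} & 0 & 0 \\ 0 & a_{22} & 0 \\ 0 & a_{32} & a_{33} \end{array}\right] \longmapsto \left(a_{11},\, \left[\begin{array}{cc} a_{33} & a_{32} \\ 0 & a_{22} \end{array}\right]\right),$$
where the $2\times 2$ component is obtained by reversing the order of the diagonal (this is the standard isomorphism from the ring of lower triangular $2\times 2$ matrices onto $T_2(R)$).

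Since $R$ is local it is quasipolar, and a direct product of two rings is quasipolar if and only if each factor is. Therefore $S_i$ is quasipolar if and only if $T_2(R)$ is quasipolar, which, under the bleached hypothesis on $R$, is equivalent to $R$ being uniquely bleached by Theorem \ref{orhan1}. The only nonroutine point is verifying that $\varphi_1$ and $\varphi_2$ are indeed ring homomorphisms, but this reduces to inspecting the block structure and is no harder than the corresponding verification in Proposition \ref{orhan5}.
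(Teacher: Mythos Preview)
Your proof is correct and essentially follows the paper's approach: the paper handles $(1)\Leftrightarrow(2)$ exactly as you do (it just writes ``as in the proof of Proposition~\ref{orhan5}'', meaning $S_1\cong T_2(R)\oplus R$ and then Theorem~\ref{orhan1}). The only cosmetic difference is in linking $(3)$: the paper exhibits a direct isomorphism $S_1\to S_2$ to get $(2)\Leftrightarrow(3)$, whereas you split $S_2\cong R\oplus T_2(R)$ and reinvoke Theorem~\ref{orhan1} to get $(1)\Leftrightarrow(3)$; both routes are equally short and rest on the same block-diagonal observation.
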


\begin{proof} $(1)\Leftrightarrow(2)$ As in the proof of Proposition~\ref{orhan5},
$R$ is uniquely bleached if and only if $S_1$ is quasipolar, as
asserted.

$(2)\Leftrightarrow(3)$ Let $\varphi: S_1\rightarrow S_2$ given
by $$A=\left[%
\begin{array}{ccc}
  a_{11} & 0 & a_{13} \\
  0 & a_{22} & 0 \\
  0 & 0 & a_{33} \\
\end{array}%
\right] \mapsto \left[%
\begin{array}{ccc}
  a_{22} & 0 & 0 \\
  0 & a_{33} & 0 \\
  0 & a_{13} & a_{11} \\
\end{array}%
\right]$$ for any $A\in S_1$. Then $\varphi$ is an isomorphism.
Hence $S_1$ is quasipolar if and only if $S_2$ is quasipolar.
\end{proof}

Let $R$ be a commutative local ring. By Theorem~\ref{orhan1},
Proposition~\ref{orhan5}, Corollary~\ref{orhan6} and
Corollary~\ref{orhan7}, the rings $$\left[%
\begin{array}{ccc}
  R & 0 & 0\\
  0 & R & 0 \\
  R & 0 & R \\
\end{array}%
\right], \left[%
\begin{array}{ccc}
  R & 0 & 0\\
  R & R & R \\
  0 & 0 & R \\
\end{array}%
\right],\left[%
\begin{array}{ccc}
  R & 0 & R\\
  0 & R & 0 \\
  0 & 0 & R \\
\end{array}%
\right],\left[%
\begin{array}{ccc}
  R & 0 & 0\\
  0 & R & 0 \\
  R & R & R \\
\end{array}%
\right]$$

\noindent are all quasipolar.

\begin{rem}\label{rem1} \rm{ Let $\Bbb Z_{(2)}=\{\frac{m}{n}~|~m,n\in \Bbb Z, 2\nmid
n\}$. By \cite[Example 1]{WC} and \cite[Remark 3.2.11]{Do},
$M_3(\Bbb Z_{(2)})$ is not strongly clean and so it is not
quasipolar. However, by Theorem~\ref{orhan1},
Proposition~\ref{orhan5}, Corollary~\ref{orhan6} and
Corollary~\ref{orhan7}, the rings
$\displaystyle{\left[%
\begin{array}{ccc}
  \Bbb Z_{(2)} & 0 & 0\\
  0 & \Bbb Z_{(2)} & 0 \\
  \Bbb Z_{(2)} & 0 &\Bbb Z_{(2)} \\
\end{array}%
\right]}, \displaystyle{\left[%
\begin{array}{ccc}
  \Bbb Z_{(2)} & 0 & 0\\
  \Bbb Z_{(2)} & \Bbb Z_{(2)} & \Bbb Z_{(2)} \\
  0 & 0 & \Bbb Z_{(2)} \\
\end{array}%
\right]},\displaystyle{\left[%
\begin{array}{ccc}
  \Bbb Z_{(2)} & 0 &\Bbb Z_{(2)}\\
  0 & \Bbb Z_{(2)} & 0 \\
  0 & 0 & \Bbb Z_{(2)} \\
\end{array}%
\right]},\linebreak \displaystyle{\left[%
\begin{array}{ccc}
  \Bbb Z_{(2)} & 0 & 0\\
  0 & \Bbb Z_{(2)} & 0 \\
  \Bbb Z_{(2)} & \Bbb Z_{(2)} & \Bbb Z_{(2)} \\
\end{array}%
\right]}$ are all quasipolar.}
\end{rem}

\section{Matrices Over Power Series Rings}

In this section, we characterize quasipolar matrices over the
power series ring of a local ring. In order to prove
Theorem~\ref{orhan11}, we need the following lemma.

\begin{lem}\label{lem2} \rm{ Let $R$ be a commutative local ring and $A(x)\in
M_2\big(R[[x]]\big)$. The following are equivalent.
\begin{itemize}
\item[(1)] $\chi\big(A(0)\big)$ has a root in $J(R)$ and a root in
$U(R)$.
\item[(2)]$\chi\big(A(x)\big)$ has a root in $J\big(R[[x]]\big)$ and a root in
$U\big(R[[x]]\big)$.
\end{itemize}}
\end{lem}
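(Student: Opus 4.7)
The plan is to view $\chi(A(x))(t)=t^{2}-\mathrm{tr}(A(x))\,t+\det(A(x))$ as a monic polynomial in $t$ over the commutative local ring $R[[x]]$, whose Jacobson radical is $J(R)+xR[[x]]$ and whose units are precisely the power series with unit constant term. The direction $(2)\Rightarrow(1)$ is immediate: the evaluation homomorphism $\varepsilon:R[[x]]\to R$, $f(x)\mapsto f(0)$, sends $J(R[[x]])$ into $J(R)$ and $U(R[[x]])$ into $U(R)$, and carries $\chi(A(x))$ to $\chi(A(0))$. Hence any root of $\chi(A(x))$ in $J(R[[x]])$, respectively $U(R[[x]])$, specializes under $\varepsilon$ to a root of $\chi(A(0))$ in $J(R)$, respectively $U(R)$.

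For $(1)\Rightarrow(2)$, suppose $\chi(A(0))(t)=(t-\alpha_{0})(t-\beta_{0})$ with $\alpha_{0}\in J(R)$ and $\beta_{0}\in U(R)$. The key point I would record first is that the two linear factors are coprime in $R[t]$, since
\[
\beta_{0}-\alpha_{0}=\beta_{0}(1-\beta_{0}^{-1}\alpha_{0})
\]
is a unit (the second factor lies in $1+J(R)$), and consequently $(\beta_{0}-\alpha_{0})^{-1}[(t-\alpha_{0})-(t-\beta_{0})]=1$.

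Since $R[[x]]$ is $x$-adically complete and the above factorization is exactly the reduction of $\chi(A(x))(t)$ modulo $xR[[x]]$, the next step is to invoke the standard Hensel lifting for complete local rings: the coprime factorization over $R=R[[x]]/xR[[x]]$ lifts uniquely to a factorization
\[
\chi(A(x))(t)=(t-\alpha(x))(t-\beta(x))
\]
in $R[[x]][t]$ with $\alpha(0)=\alpha_{0}$ and $\beta(0)=\beta_{0}$. Finally, $\alpha(x)\in\alpha_{0}+xR[[x]]\subseteq J(R)+xR[[x]]=J(R[[x]])$ and $\beta(x)\in\beta_{0}+xR[[x]]\subseteq U(R[[x]])$ (the latter because the constant term $\beta_{0}$ is a unit in $R$), which produces the desired roots of $\chi(A(x))$.

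The main obstacle is the Hensel-lifting step: one must verify that coprimality of the two reductions over $R$ suffices to lift the factorization in $R[[x]][t]$ and that the lifted factors land in the prescribed subsets. Commutativity of $R$ is essential throughout, both for $\chi(A(x))$ to be an honest monic quadratic in the scalar $t$ and for the degree-by-degree recursion in the Hensel lift to make sense.
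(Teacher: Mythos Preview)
Your argument is correct. The direction $(2)\Rightarrow(1)$ is handled exactly as in the paper, via evaluation at $x=0$. For $(1)\Rightarrow(2)$ you take a more conceptual route than the paper: you observe that $\beta_{0}-\alpha_{0}\in U(R)$ makes the two linear factors of $\chi(A(0))$ coprime in $R[t]$, then appeal to Hensel's lemma for the $x$-adically complete ring $R[[x]]$ to lift the factorization, and finally note that the lifted roots have the right constant terms so land in $J(R[[x]])$ and $U(R[[x]])$ respectively. The paper instead carries out the Hensel lift by hand: writing $y=\sum_{i\ge 0}b_{i}x^{i}$ and the coefficients of $\chi(A(x))$ as power series, it derives the coefficient-by-coefficient system $b_{0}^{2}-\mu_{0}b_{0}-\lambda_{0}=0$, $(2b_{0}-\mu_{0})b_{1}=\lambda_{1}+b_{0}\mu_{1}$, and so on, solving each successive equation using that $2b_{0}-\mu_{0}=\alpha_{0}-\beta_{0}$ (or $\beta_{0}-\alpha_{0}$) is a unit. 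Your invocation of Hensel is cleaner and makes the structure of the argument transparent; the paper's explicit recursion has the virtue of being self-contained and showing precisely where the unit condition $\beta_{0}-\alpha_{0}\in U(R)$ enters at each step. Both ultimately rely on the same invertibility, and your closing remark that commutativity is needed for the recursion is exactly the point the paper is tacitly using.
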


\begin{proof} $(1)\Rightarrow (2)$ Assume that $\chi\big(A(0)\big)=y^2-\mu y-\lambda$ has a root $\alpha\in J(R)$ and a root
$\beta\in U(R)$. Let $y=\sum\limits_{i=0}^{\infty}b_ix^i$. Then
$y^2=\sum\limits_{i=0}^{\infty}c_ix^i$ where
$c_i=\sum\limits_{k=0}^{i}b_kb_{i-k}$. Let
$\mu(x)=\sum\limits_{i=0}^{\infty}\mu_ix^i,
\lambda(x)=\sum\limits_{i=0}^{\infty}\lambda_ix^i\in R[[x]]$ where
$\mu_0=\mu$ and $\lambda_0=\lambda$. Then, $y^2-\mu (x)y-\lambda
(x)=0$ holds in $R[[x]]$ if the following equations are satisfied:
$$\begin{array}{c}
b_0^2-b_0\mu_0-\lambda_0=0;\\
(b_0b_1+b_1b_0)-(b_0\mu_1+b_1\mu_0)-\lambda_1=0;\\
(b_0b_2+b_1^2+b_2b_0)-(b_0\mu_2+b_1\mu_1+b_2\mu_0)-\lambda_2=0;\\
\vdots
\end{array}$$ Obviously, $\mu_0=trA(0)=\alpha+\beta\in U(R)$. Let $b_0=\alpha$. Since
$R$ is commutative local, there exists some $b_1\in R$ such that
$$b_0b_1+b_1(b_0-\mu_0)=\lambda_1+b_0\mu_1.$$ Further, there exists some $b_2\in R$ such that
$$b_0b_2+b_2(b_0-\mu_0)=\lambda_2-b_1^2+b_0\mu_2+b_1\mu_1.$$ By
iteration of this process, we get $b_3,b_4,\cdots $. Then $y^2-\mu
(x)y -\lambda (x)=0$ has a root $\alpha(x)\in J\big(R[[x]]\big)$.
If $b_0=\beta$, analogously, we show that $y^2-\mu (x)y -\lambda
(x)=0$ has a root $\beta(x)\in U\big(R[[x]]\big)$.

$(2)\Rightarrow (1)$ Suppose that $\chi\big(A(x)\big)=y^2-\mu(x)
y-\lambda(x)$ has a root $\alpha(x)\in J\big(R[[x]]\big)$ and a
root $\beta(x)\in U\big(R[[x]]\big)$.  Then $\mu(x)=trA(x)$ and
$-\lambda(x)=detA(x)$. Hence $\mu(0)=trA(0)$ and
$-\lambda(0)=detA(0)$. Thus, $\chi\big(A(0)\big)=y^2-\mu(0)
y-\lambda(0)$. Since $\alpha(x)^2-\mu(x) \alpha(x)-\lambda(x)=0$
and $\beta(x)^2-\mu(x) \beta(x)-\lambda(x)=0$, $\alpha(0)^2-\mu(0)
\alpha(0)-\lambda(0)=0$ and $\beta(0)^2-\mu(0)
\beta(0)-\lambda(0)=0$. Then $\chi\big(A(0)\big)=y^2-\mu(0)
y-\lambda(0)$ has a root $\alpha(0)\in J(R)$ and a root
$\beta(0)\in U(R)$.
\end{proof}

\begin{thm} \label{orhan11}\rm{Let $R$ be a commutative local ring. The following are equivalent.
\begin{itemize}
\item[(1)] $A(0)\in M_2(R)$ is quasipolar.
\item[(2)] $A(x)\in
M_2\big(R[[x]]\big)$ is quasipolar.
\end{itemize}}
\end{thm}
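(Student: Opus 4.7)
The plan is to reduce Theorem~\ref{orhan11} to a case analysis on the location of the roots of the characteristic polynomial, with Lemma~\ref{lem2} handling the key mixed case.

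First, I would invoke the characterization of quasipolar $2\times 2$ matrices over a commutative local ring (cf.\ \cite{CC2}): for $B\in M_2(R)$ with $R$ commutative local, $B$ is quasipolar if and only if exactly one of the following three situations holds. Either (i) $B\in U(M_2(R))$, equivalently $\det B\in U(R)$, so $\chi(B)$ has both roots in $U(R)$; or (ii) $B\in M_2(R)^{qnil}$, equivalently $\mathrm{tr}(B),\det(B)\in J(R)$, so $\chi(B)$ has both roots in $J(R)$; or (iii) $\chi(B)$ has one root in $J(R)$ and one root in $U(R)$. These three cases correspond to the three ways the spectrum of $B$ can split with respect to the decomposition of $R$ into $J(R)$ and $U(R)$.

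Second, I would transfer each case between $R$ and $R[[x]]$. For case (i), recall that a power series $f(x)\in R[[x]]$ is a unit iff $f(0)\in U(R)$; applied to $\det A(x)$, this gives $A(x)\in U(M_2(R[[x]]))$ iff $A(0)\in U(M_2(R))$. For case (ii), note that $J(R[[x]])=J(R)+xR[[x]]$, so a power series lies in $J(R[[x]])$ iff its constant term lies in $J(R)$; applied to $\mathrm{tr}(A(x))$ and $\det(A(x))$, this yields the equivalence of the trace--determinant condition at the $R$-level and at the $R[[x]]$-level. For case (iii), this is precisely Lemma~\ref{lem2}. Combining, $A(0)$ is quasipolar iff one of (i)--(iii) holds for $A(0)$ iff the corresponding condition holds for $A(x)$ iff $A(x)$ is quasipolar.

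The main obstacle is likely case (ii): establishing, over a commutative local ring $R$, the equivalence $B\in M_2(R)^{qnil} \iff \mathrm{tr}(B),\det(B)\in J(R)$. The sufficient direction follows from Cayley--Hamilton (so that $B$ satisfies a monic polynomial with coefficients in $J(R)$) together with the fact that $1+BC$ for $C\in \mathrm{comm}(B)$ lies in a commutative subring of $M_2(R)$ whose invertibility is controlled by determinants; the necessary direction is obtained by testing quasinilpotence against carefully chosen elements of $\mathrm{comm}(B)$ to force the trace and determinant into $J(R)$. Once this characterization is settled, the rest of the argument is routine power-series arithmetic combined with the already-established Lemma~\ref{lem2}.
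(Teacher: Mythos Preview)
Your proposal is correct and follows essentially the same route as the paper: a three-case split on whether $A(0)$ is invertible, has both trace and determinant in $J(R)$, or falls into the mixed case, with Lemma~\ref{lem2} handling the third case and the standard facts $U(R[[x]])=\{f:f(0)\in U(R)\}$ and $J(R[[x]])=J(R)+xR[[x]]$ handling the first two. The obstacle you flag in case~(ii) is not something you need to re-derive---the paper simply cites \cite[Theorem~2.6]{CC2} for that equivalence (and \cite[Proposition~2.8]{CC2} for case~(iii)), so you may do likewise.
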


\begin{proof} $(1)\Rightarrow(2)$ It is known that $R[[x]]$ is
local. To complete the proof we consider the following cases:

\begin{itemize}
    \item[(i)] $A(0)\in GL_2(R)$,
    \item[(ii)] $detA(0), trA(0)\in J(R)$,
    \item[(iii)] $detA(0)\in J(R),
trA(0)\in U(R)$ and $\chi\big(A(0)\big)$ is solvable in $R$.
\end{itemize}

If $A(0)\in GL_2(R)$, then $A(x)\in GL_2\big(R[[x]]\big)$ and so
$A(x)\in M_2\big(R[[x]]\big)$ is quasipolar by \cite[Example
2.1]{CC2}. If $detA(0)$, $trA(0)\in J(R)$, then $trA(x)$,
$detA(x)\in J\big(R[[x]]\big)$ and so $A(x)$ is quasipolar by
\cite[Theorem 2.6]{CC2}. Now suppose that $detA(0)\in J(R),
trA(0)\in U(R)$ and $\chi\big(A(0)\big)$ has two roots
$\alpha,\beta\in R$. Then $detA(x)\in J\big(R[[x]]\big)$ and
$trA(x)\in U\big(R[[x]]\big)$. Since $detA(0)\in J(R)$ and
$trA(0)\in U(R)$, either $\alpha\in J(R)$ or $\beta\in J(R)$.
Without loss of generality, we assume that $\alpha\in J(R)$ and
$\beta\in U(R)$. According to Lemma~\ref{lem2},
$\chi\big(A(x)\big)$ has a root in $J\big(R[[x]]\big)$ and a root
in $U\big(R[[x]]\big)$. Hence $A(x)$ is quasipolar in
$M_2\big(R[[x]]\big)$ by \cite[Proposition 2.8]{CC2}.

$(2)\Rightarrow(1)$ is similar to the proof of
$(1)\Rightarrow(2)$.
\end{proof}

\begin{ex}\rm{Let $R = {\Bbb
Z}_4[[x]]$, and let $$A(x)=\left[
\begin{array}{cc}
\overline{0}&-\sum\limits_{n=1}^{\infty}(\overline{1}+\overline{3}^n)x^n\\
\overline{1}&\overline{3}-
\sum\limits_{n=1}^{\infty}(\overline{1}+\overline{3}^n)x^n\end{array}
\right]\in M_2(R).$$ Obviously, ${\Bbb Z}_4$ is a commutative
local ring. Since $A(0)=\left[
\begin{array}{cc}
\overline{0}&\overline{0}\\
\overline{1}&\overline{3} \end{array} \right]$, \linebreak
$\chi\big(A(0)\big)=t^2-trA(0)t+detA(0)=t^2-\overline{3}t=t(t-\overline{3})$
is solvable in $\Bbb Z_4$. By \cite[Proposition 2.8]{CC2},
$A(0)\in M_2(\Bbb Z_4)$ is quasipolar. In view of
Theorem~\ref{orhan11}, $A(x)\in M_2(R)$ is quasipolar.}
\end{ex}

\begin{thm}\label{orhan12}\rm{Let $R$ be a commutative
local ring and for $m\geq 1$ \linebreak $A(x)\in
M_2\big(R[[x]]/(x^{m})\big)$. The following are equivalent.
\begin{itemize}
\item[(1)] $A(0)\in M_2(R)$ is quasipolar.
\item[(2)] $A(x)\in
M_2\big(R[[x]]/(x^{m})\big)$ is quasipolar.
\end{itemize}}
\end{thm}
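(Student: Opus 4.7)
The plan is to follow the proof of Theorem~\ref{orhan11} verbatim, with $R[[x]]$ replaced throughout by the truncated ring $S := R[[x]]/(x^{m})$. Since $R[[x]]$ is a commutative local ring with $(x) \subseteq J(R[[x]])$, the quotient $S$ is again a commutative local ring, with
$$J(S) = \bigl\{\textstyle\sum_{i=0}^{m-1} a_i x^i \mid a_0 \in J(R)\bigr\}, \qquad U(S) = \bigl\{\textstyle\sum_{i=0}^{m-1} a_i x^i \mid a_0 \in U(R)\bigr\}.$$
The evaluation homomorphism $\varepsilon\colon S \to R$, $f(x) \mapsto f(0)$, extends entrywise to $M_2(S) \to M_2(R)$, sends $J(S)$ onto $J(R)$ and $U(S)$ onto $U(R)$, and intertwines the characteristic polynomial with coefficient evaluation at $x=0$.

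The central preliminary step is to establish the analogue of Lemma~\ref{lem2} for $S$: $\chi\bigl(A(0)\bigr)$ has a root in $J(R)$ and a root in $U(R)$ if and only if $\chi\bigl(A(x)\bigr)$ has a root in $J(S)$ and a root in $U(S)$. The $(\Leftarrow)$ direction is immediate by applying $\varepsilon$. For $(\Rightarrow)$, write $\chi\bigl(A(x)\bigr) = y^2 - \mu(x) y - \lambda(x)$ with $\mu(x), \lambda(x) \in S$ and seek a root $b(x) = \sum_{i=0}^{m-1} b_i x^i$. Reading off the coefficient of $x^k$ in $b(x)^2 - \mu(x) b(x) - \lambda(x) = 0$, exactly as in Lemma~\ref{lem2}, one obtains a recursion determining $b_k$ from $b_0, \ldots, b_{k-1}$ via an equation of the form
$$b_k\,(2b_0 - \mu_0) = \Phi_k(b_0,\ldots,b_{k-1},\mu_0,\ldots,\mu_k,\lambda_k).$$
Taking $b_0 = \alpha$ (respectively $b_0 = \beta$) and noting that $\mu_0 = trA(0) = \alpha + \beta$, we get $2b_0 - \mu_0 = \alpha - \beta$ (respectively $\beta - \alpha$), which is a unit of $R$ since $\alpha \in J(R)$ and $\beta \in U(R)$. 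Hence each $b_k$ is solvable in $R$; the recursion now simply terminates at $k = m-1$, producing a root of $\chi\bigl(A(x)\bigr)$ in $J(S)$ (respectively $U(S)$).

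With this truncated lemma, the implication $(1)\Rightarrow(2)$ splits into the same three cases as in Theorem~\ref{orhan11}. If $A(0) \in GL_2(R)$, then $\det A(x) \in U(S)$, so $A(x) \in GL_2(S)$ and hence is quasipolar by \cite[Example 2.1]{CC2}. If $\det A(0), trA(0) \in J(R)$, then $\det A(x), trA(x) \in J(S)$ and $A(x)$ is quasipolar by \cite[Theorem 2.6]{CC2}. In the remaining case $\det A(0)\in J(R),\ trA(0)\in U(R)$, and $\chi\bigl(A(0)\bigr)$ is solvable in $R$; its two roots must then split one into $J(R)$ and the other into $U(R)$, and the truncated version of Lemma~\ref{lem2} produces roots of $\chi\bigl(A(x)\bigr)$ in $J(S)$ and $U(S)$, after which \cite[Proposition 2.8]{CC2} yields that $A(x)$ is quasipolar. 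The reverse implication $(2)\Rightarrow(1)$ is obtained by applying $\varepsilon$ to the corresponding structures and falling back into the same three cases.

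The only genuine obstacle is the adaptation of Lemma~\ref{lem2} to the truncated setting, but the recursion is finite and its solvability is governed by the same invertibility of $2b_0 - \mu_0 = \pm(\alpha - \beta)$ in $R$, so no new idea beyond capping the summation at $m-1$ is required.
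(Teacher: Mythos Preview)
Your proposal is correct and is exactly the approach the paper takes: the paper's entire proof reads ``The proof is similar to that of Theorem~\ref{orhan11},'' and you have simply spelled out what that similarity amounts to, including the finite truncation of the recursion in Lemma~\ref{lem2}. Your explicit observation that $2b_0-\mu_0=\pm(\alpha-\beta)\in U(R)$ is the solvability condition for each step is, if anything, more informative than the paper's corresponding line in Lemma~\ref{lem2}.
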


\begin{proof} The proof is similar to that of
Theorem~\ref{orhan11}.
\end{proof}

\begin{ex}\rm{Let $R = {\Bbb
Z}_4[[x]]/(x^2)$, and let $$\overline{A}(x)=\left[
\begin{array}{cc}
\overline{3}+(x^2)&\overline{2}+\overline{2}x+(x^2)\\
\overline{2}+x +(x^2)&\overline{2}+\overline{3}x+(x^2) \end{array}
\right]\in M_2(R).$$ Obviously, ${\Bbb Z}_4$ is a commutative
local ring. Since $A(0)=\left[
\begin{array}{cc}
\overline{3}&\overline{2}\\
\overline{2}&\overline{2} \end{array} \right]$,
$\chi\big(A(0)\big)=t^2-trA(0)t+detA(0)=t^2-t+\overline{2}=(t-\overline{3})(t+\overline{2})$
is solvable in $\Bbb Z_4$. By \cite[Proposition 2.8]{CC2},
$A(0)\in M_2(\Bbb Z_4)$ is quasipolar. In view of
Theorem~\ref{orhan12}, $A(x)\in M_2(R)$ is quasipolar.}
\end{ex}

\begin{lem}\label{lem3} \rm{ Let $R$ be a local ring. Then $R$ is uniquely
bleached if and only if $R[[x]]$ is uniquely bleached.}
\end{lem}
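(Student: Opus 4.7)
The plan is to exploit that $R[[x]]$ is a local ring with $J(R[[x]]) = J(R) + xR[[x]]$ and $U(R[[x]])$ consisting of power series whose constant term is a unit. Accordingly, I will relate the endomorphisms $l_{b(x)} - r_{a(x)}$ on $R[[x]]$ to $l_{b_0} - r_{a_0}$ on $R$ by working coefficient-by-coefficient, and similarly for the swapped pair $l_{a(x)} - r_{b(x)}$.

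For the direction $(\Rightarrow)$, I assume $R$ is uniquely bleached and I take arbitrary $a(x) = \sum a_i x^i \in J(R[[x]])$, $b(x) = \sum b_i x^i \in U(R[[x]])$, so that $a_0 \in J(R)$ and $b_0 \in U(R)$. To show $l_{b(x)} - r_{a(x)}$ is injective, I expand $b(x)f(x) - f(x)a(x) = 0$ for $f(x) = \sum f_i x^i$. The $x^0$-coefficient gives $b_0 f_0 - f_0 a_0 = 0$, forcing $f_0 = 0$ by injectivity of $l_{b_0} - r_{a_0}$ on $R$; proceeding by induction on $i$, the $x^i$-coefficient reduces to $b_0 f_i - f_i a_0 = 0$ modulo terms involving $f_0, \ldots, f_{i-1}$ which vanish by hypothesis, so $f_i = 0$. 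For surjectivity, given $c(x) = \sum c_i x^i$, I build $f(x)$ recursively: the $x^i$-coefficient of $b(x)f(x) - f(x)a(x) = c(x)$ rearranges to
\[
b_0 f_i - f_i a_0 \;=\; c_i - \sum_{\substack{j+k=i\\ j\geq 1}} b_j f_k + \sum_{\substack{j+k=i\\ k\geq 1}} f_j a_k,
\]
where the right-hand side only depends on the already-constructed $f_0, \ldots, f_{i-1}$, so surjectivity of $l_{b_0} - r_{a_0}$ on $R$ produces the required $f_i$. The exact same argument, with $a$ and $b$ exchanged, handles $l_{a(x)} - r_{b(x)}$.

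For the direction $(\Leftarrow)$, I assume $R[[x]]$ is uniquely bleached and pick $a \in J(R)$, $b \in U(R)$. Viewing $a, b$ as constant power series, injectivity of $l_b - r_a$ on $R[[x]]$ restricts to injectivity on the subgroup $R \subseteq R[[x]]$, so for $r \in R$ with $br - ra = 0$ we get $r = 0$. For surjectivity, given $c \in R$, surjectivity of $l_b - r_a$ on $R[[x]]$ produces $f(x) = \sum f_i x^i$ with $bf(x) - f(x)a = c$; comparing the constant term gives $bf_0 - f_0 a = c$, so $f_0 \in R$ is the desired preimage. The same argument treats $l_a - r_b$. Thus $R$ is uniquely bleached.

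The main content is the $(\Rightarrow)$ direction, where the only delicate point is isolating the $f_i$-terms so the induction cleanly reduces to applying the uniquely bleached property of $R$ at each step; the $(\Leftarrow)$ direction is essentially a triviality once one observes that restriction to constant series preserves both injectivity and surjectivity of the relevant difference operators.
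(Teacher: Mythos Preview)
Your proof is correct and follows essentially the same coefficient-by-coefficient induction as the paper's own argument. The only minor difference is that the paper dispatches the surjectivity half of the $(\Rightarrow)$ direction by citing \cite[Example 2.1.11(6)]{D} (which shows $R[[x]]$ is bleached whenever $R$ is), whereas you supply the explicit recursion yourself, making your version slightly more self-contained.
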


\begin{proof} Assume that $R$ is uniquely
bleached. Then $l_u-r_j$ is an isomorphism for any $j\in J(R)$ and
$u\in U(R)$ and let $f(x)=\sum\limits_{i=1}^{\infty}a_ix^i\in
R[[x]]$. Since $R$ is bleached, by \cite[Example 2.1.11(6)]{D},
$R[[x]]$ is bleached. If, for
$j(x)=\sum\limits_{i=1}^{\infty}j_ix^i\in J\big(R[[x]]\big)$ and
$u(x)=\sum\limits_{i=1}^{\infty}u_ix^i\in U\big(R[[x]]\big)$,
$(l_{j(x)}-r_{u(x)})(f(x))=0$, then

\[
\begin{aligned}
 j_0a_0&=a_0u_0\\
j_0a_1+j_1a_0&=a_0u_1+a_1u_0\\
j_0a_2+j_1a_1+j_2a_0&=a_0u_2+a_1u_1+a_2u_0\\
&\vdots\\
\end{aligned}
\text{\quad ~~~~~~~~~~\quad}
\begin{gathered}
(i_1)\\
(i_2)\\
(i_3)\\
\vdots\\
\end{gathered}
\]

\noindent By assumption, $l_{j_0}-r_{u_0}$ is an isomorphism and
so $a_0=0$ by $(i_1)$. As $a_0=0$, by $(i_2)$, $j_0a_1=a_1u_0$ and
so $a_1=0$ by assumption. Since $a_0=0=a_1$, by $(i_3)$,
$j_0a_2=a_2u_0$ and so $a_2=0$ by assumption. By iteration of this
process, we deduce that $f(x)=0$. Hence $l_{j(x)}-r_{u(x)}$ is an
isomorphism and so $R[[x]]$ is uniquely bleached. Conversely,
suppose that $R[[x]]$ is uniquely bleached. Then
$l_{j(x)}-r_{u(x)}$ is an isomorphism for any
$j(x)=\sum\limits_{i=1}^{\infty}j_ix^i\in J\big(R[[x]]\big)$ and
$u(x)=\sum\limits_{i=1}^{\infty}u_ix^i\in U\big(R[[x]]\big)$ and
let $r\in R$. Let $(l_{j}-r_{u})(r)=0$ with $j\in J(R)$ and $u\in
U(R)$. Since $j\in J\big(R[[x]]\big)$ and $u\in
U\big(R[[x]]\big)$, by assumption, $r=0$ and so $l_{j}-r_{u}$ is
injective. The remaining proof is to show that $l_{j}-r_{u}$ is
surjective. Since $l_{j(x)}-r_{u(x)}$ is an isomorphism where
$j(0)=j$ and $u(0)=u$, for any $r\in R$, we can find some
$f(x)=\sum\limits_{i=1}^{\infty}a_ix^i\in R[[x]]$ such that
$j(x)f(x)-f(x)u(x)=r$. Hence $ja_0-a_0u=r$ with $a_0\in R$ and so
$l_{j}-r_{u}$ is surjective. Thus $R$ is uniquely bleached.
\end{proof}

\begin{prop}\label{orhan9} \rm{ Let $R$ be a bleached local
ring. The following are equivalent.
\begin{itemize}
\item[(1)] $\mathcal{T}_3(R)$ is quasipolar.
\item[(2)] $\mathcal{T}_3\big(R[[x]]\big)$ is quasipolar.
\end{itemize}}
\end{prop}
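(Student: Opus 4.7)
The proof is a short chain of equivalences built on the machinery already developed in the paper, so the plan is to just assemble the pieces.

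First I would check that $R[[x]]$ satisfies the hypotheses of Theorem~\ref{orhan1}. Since $R$ is local, $R[[x]]$ is local; and since $R$ is assumed bleached, $R[[x]]$ is bleached by \cite[Example 2.1.11(6)]{D} (this is precisely the fact invoked in the proof of Lemma~\ref{lem3}). Thus Theorem~\ref{orhan1} applies to both $R$ and $R[[x]]$.

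Now the proof runs as follows. By Theorem~\ref{orhan1} applied to the bleached local ring $R$, $\mathcal{T}_3(R)$ is quasipolar if and only if $R$ is uniquely bleached. By Lemma~\ref{lem3}, $R$ is uniquely bleached if and only if $R[[x]]$ is uniquely bleached. Finally, applying Theorem~\ref{orhan1} to the bleached local ring $R[[x]]$, $R[[x]]$ is uniquely bleached if and only if $\mathcal{T}_3(R[[x]])$ is quasipolar. Chaining these equivalences yields the result.

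There is essentially no obstacle: all the real work has been done in Theorem~\ref{orhan1} (which converts the quasipolarity condition on $\mathcal{T}_3$ into the algebraic condition of being uniquely bleached) and in Lemma~\ref{lem3} (which propagates uniquely-bleached from $R$ to $R[[x]]$ and back). The only thing I would be careful about is making the hypothesis check explicit, namely that $R[[x]]$ is bleached local, so that Theorem~\ref{orhan1} can legitimately be invoked for $R[[x]]$ and not just for $R$.
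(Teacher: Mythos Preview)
Your proof is correct and follows essentially the same route as the paper: apply Theorem~\ref{orhan1} on both ends and bridge with Lemma~\ref{lem3}. You are in fact slightly more careful than the paper in explicitly verifying that $R[[x]]$ is bleached (via \cite[Example 2.1.11(6)]{D}) before invoking Theorem~\ref{orhan1} for $R[[x]]$; the paper only notes that $R[[x]]$ is local.
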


\begin{proof} $(1)\Rightarrow (2)$ Assume that $\mathcal{T}_3(R)$ is quasipolar. By Theorem~\ref{orhan1}, $R$ is uniquely bleached.
 Note that if $R$ is local, then so is $R[[x]]$ because
$R/J(R)\cong R[[x]]/J\big(R[[x]]\big)$. According to
Lemma~\ref{lem3}, $R[[x]]$ is uniquely bleached. Hence
$\mathcal{T}_3\big(R[[x]]\big)$ is quasipolar by
Theorem~\ref{orhan1}.

$(2)\Rightarrow (1)$ Suppose that $\mathcal{T}_3\big(R[[x]]\big)$
is quasipolar. Then $R[[x]]$ is uniquely bleached by
Theorem~\ref{orhan1}. In view of Lemma~\ref{lem3}, $R$ is uniquely
bleached. Hence $\mathcal{T}_3(R)$ is quasipolar by
Theorem~\ref{orhan1}.
\end{proof}

\begin{cor}\label{orhan10} \rm{ Let $R$ be a bleached local
ring. For any positive integer $n$, the following are equivalent.
\begin{itemize}
\item[(1)] $T_n(R)$ is quasipolar.
\item[(2)] $T_n\big(R[[x]]\big)$ is quasipolar.
\end{itemize}}
\end{cor}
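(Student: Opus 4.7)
The plan is to follow the template of Proposition \ref{orhan9}: reduce both sides of the equivalence to the uniquely bleached property of the base ring, then use Lemma \ref{lem3} as the bridge between $R$ and $R[[x]]$. As a preliminary observation, $R[[x]]$ is itself a bleached local ring (locality via $R/J(R) \cong R[[x]]/J(R[[x]])$ and bleached-ness via \cite[Example 2.1.11(6)]{D}), so the same structural criterion will apply on both sides of the desired equivalence.

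The crux is to establish the intermediate claim: for any bleached local ring $S$ and any $n \geq 1$, the ring $T_n(S)$ is quasipolar if and only if $S$ is uniquely bleached. The cases $n = 1$ (trivial, since $S$ itself is local hence quasipolar) and $n = 2$ (exactly Theorem \ref{orhan1}) are immediate. For $n \geq 3$, the forward direction reduces to $n = 2$: the corner $(e_{11}+e_{22})\,T_n(S)\,(e_{11}+e_{22}) \cong T_2(S)$ is quasipolar by \cite[Proposition 3.6]{YC}, so $S$ is uniquely bleached by Theorem \ref{orhan1}. For the reverse direction I would generalize the construction in Theorem \ref{orhan1}: given $A = [a_{ij}] \in T_n(S)$, partition the diagonal indices into $I_J = \{i : a_{ii} \in J(S)\}$ and $I_U = \{i : a_{ii} \in U(S)\}$, and build an idempotent $E = [e_{ij}]$ with $e_{ii} = 1$ on $I_J$ and $e_{ii} = 0$ on $I_U$, whose off-diagonal entries (for mixed pairs $i < j$) are defined inductively as the unique solutions of Sylvester-type equations $a_{ii}\,e_{ij} - e_{ij}\,a_{jj} = \pm a_{ij} + (\text{corrections from earlier entries})$, solvable and unique by the uniquely bleached hypothesis. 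One then checks $E^2 = E$, $A - E \in U(T_n(S))$, and $AE \in J(T_n(S)) \subseteq T_n(S)^{qnil}$; the condition $E \in comm^2(A)$ follows from matching diagonal and off-diagonal entries of any $X \in comm(A)$ together with injectivity of $l_{a_{ii}} - r_{a_{jj}}$, exactly as in Cases 3--8 of the proof of Theorem \ref{orhan1}.

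Granted this intermediate equivalence, the corollary falls out via the chain
\[
T_n(R) \text{ quasipolar} \;\Leftrightarrow\; R \text{ uniquely bleached} \;\Leftrightarrow\; R[[x]] \text{ uniquely bleached} \;\Leftrightarrow\; T_n(R[[x]]) \text{ quasipolar},
\]
where the outer equivalences apply the intermediate claim to $S = R$ and $S = R[[x]]$, and the middle equivalence is Lemma \ref{lem3}. The main obstacle is the reverse direction of the intermediate equivalence when $n \geq 3$: while the conceptual framework of Theorem \ref{orhan1} suffices, one must carefully order the construction of the off-diagonal entries of $E$ so that each Sylvester equation is solved in terms of only previously-defined entries, and verify that the resulting $E$ truly lies in $comm^2(A)$ uniformly across all $2^n$ patterns of the partition $\{I_J, I_U\}$.
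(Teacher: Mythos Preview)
Your approach is essentially the paper's own. The paper's proof reads in full: ``By \cite[Proposition 2.9]{CC1} and Lemma~\ref{lem3}, the proof is completed.'' In other words, the intermediate equivalence you set out to establish --- that for a bleached local ring $S$ the ring $T_n(S)$ is quasipolar if and only if $S$ is uniquely bleached --- is precisely the content of \cite[Proposition 2.9]{CC1}, and the paper simply cites it rather than reproving it. Your reconstruction of that result (corner reduction to $T_2$ via \cite[Proposition 3.6]{YC} for the forward direction, and an inductive Sylvester-type construction of the spectral idempotent for the reverse) is the right idea for $n\geq 2$, and the final chain through Lemma~\ref{lem3} is exactly the paper's.

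There is one slip, at $n=1$. You declare the intermediate equivalence ``trivial'' there because a local ring is quasipolar, but that gives only one implication. The other --- $T_1(S)=S$ quasipolar $\Rightarrow$ $S$ uniquely bleached --- would amount to asserting that every bleached local ring is uniquely bleached, which the paper itself flags as an open question. This does not harm the corollary (for $n=1$ both $R$ and $R[[x]]$ are local, hence quasipolar, so $(1)\Leftrightarrow(2)$ holds trivially), but it does break your displayed chain of equivalences when $n=1$. The fix is simply to treat $n=1$ separately and state the intermediate claim only for $n\geq 2$.
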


\begin{proof} By \cite[Proposition 2.9]{CC1} and
Lemma~\ref{lem3}, the proof is completed.
\end{proof}

\end{document}